\documentclass[reqno]{amsart}
\usepackage{amsmath,amsthm, amsbsy, amsfonts, amssymb}
\usepackage{euscript, mathrsfs, stmaryrd, mathbbol} 
\usepackage{mathtools} 
\usepackage{graphicx, amscd, tikz-cd, tikz} 
\usetikzlibrary[matrix, arrows, arrows.meta, calc, decorations.pathmorphing, backgrounds, positioning, fit, petri, cd]
\usepackage{setspace}
\usepackage{hyperref, url}
\usepackage{xcolor}
\newcommand{\C}{\mathbb{C}} 
\newcommand{\Q}{\mathbb{Q}} 
\newcommand{\Z}{\mathbb{Z}}  

\newcommand{\abs}[1]{\lvert#1\rvert}
\newcommand{\eu}[1]{\EuScript{#1}}

\newcommand{\mbb}[1]{\mathbb{#1}}
\newcommand{\mc}[1]{\mathcal{#1}}

\newcommand{\msf}[1]{\mathsf{#1}}
\newcommand{\op}[1]{\operatorname{#1}}

\swapnumbers
\newtheorem{theorem}{Theorem}[section]
\newtheorem{lemma}[theorem]{Lemma}

\theoremstyle{definition}
\newtheorem{definition}[theorem]{Definition}
\newtheorem{example}[theorem]{Example}
\newtheorem{topic}[theorem]{}
\theoremstyle{remark}
\newtheorem{remark}[theorem]{Remark}
\newcommand{\bstar}{b_*}

\newcommand{\edge}{\msf{Edg}}
\newcommand{\ip}[2]{\langle #1 , #2 \rangle }
\newcommand{\pentagon}{\msf{Pen}}
\newcommand{\XFF}{X}
\newcommand{\wstar}{w_*}
\begin{document}
\title[Petersen Graph and monodromy of the 27 lines]{Petersen graph and monodromy of the 27 lines on the Clebsch surface}
\author{Tathagata Basak}
\address{Department of Mathematics\\Iowa State University, \\Ames, IA 50011}
\email{tathagat@iastate.edu}
%
\thanks{ Supported by Simons Foundation Collaboration Grant 637005.}
%
%
\subjclass[2020]{Primary: 14D05, 14F35; Secondary: 14J26, 20F55.}
%

%
\maketitle
\begin{abstract}
Let $G$ be the orbifold fundamental group of the moduli space of smooth cubic surfaces
$\mc{M}_{\msf{sm}}$ 
in $\mathbb{P}^3_{\C}$
with base point at the Clebsch surface $X_{\mathbb{1}}$.
The image of the monodromy action $G \to \lbrace \text{Permutations of $27$ lines on $X_{\mathbb{1}}$} \rbrace$
  is famously 
the Weyl group of type $E_6$. Here we give a
description of this monodromy action in terms of the Petersen graph $\eu{P}$
by working out the action of ten explicit generators of $G$ by elementary calculation.
These ten generators were found in joint work with 
Allcock and Looijenga while studying the description of $\mc{M}_{\msf{sm}}$ 
as a discriminant complement in a complex $4$-ball quotient.
\end{abstract}
%
%
%
\section{introduction}
Cubic surfaces in $\mathbb{P}^3$ with the configuration of $27$ lines on them 
are one of the most studied examples in algebraic geometry going back to Cayley-Salmon (1849).
There is an enormous amount of literature on them.
For two detailed modern treatments, see  \cite{H} or chapter 9 of \cite{D}.
\par
Let $\eu{P}$ denote the Petersen graph; see figure \ref{figure-Petersen}.
Let $\mc{M}_{\msf{st}}$ and $\mc{M}_{\msf{sm}}$ denote the moduli space of stable 
and smooth cubic surfaces respectively. 
Let $X_{\mathbb{1}}$ denote the Clebsch cubic surface; 
this is  the unique smooth cubic surface with $S_5$ symmetry.
Let $G$ be the orbifold fundamental group of $\mc{M}_{\msf{sm}}$ with base-point
at $X_{\mathbb{1}}$. 
By \cite{act:cubic_surfaces},  $\mc{M}_{\msf{st}}$
is a complex $4$-ball quotient.
While studying this ball quotient, we found ten natural generators 
$\lbrace g_{A} \colon A \in \eu{P} \rbrace$
of $G$ (see \ref{def-meridian-in-Sylvester-family}) 
that yield a nice presentation of $G$ as a quotient of the Artin group of 
$\eu{P}$ (this is joint work  in progress \cite{ABL}).
Let $\mc{L}(X_{\mathbb{1}})$ be the set of $27$ lines on $X_{\mathbb{1}}$.
The elements of $\mc{L}(X_{\mathbb{1}})$ can be naturally labeled by the $15$ 
edges and the $12$ pentagons in $\eu{P}$. 
We fix such a labeling in \ref{def-27lines}.
If $S$ is an edge of $\eu{P}$ or a pentagon in $\eu{P}$, then let $L_S$ temporarily denote the 
corresponding  line on $X_{\mathbb{1}}$. 
If we move $X_{\mathbb{1}}$ along a loop in the moduli space $\mc{M}_{\msf{sm}}$, the
$27$ lines on $X_{\mathbb{1}}$ get permuted by the time we get back to $X_{\mathbb{1}}$.
This defines the famous monodromy representation of $G$ on the set $\mc{L}(X_{\mathbb{1}})$.
Our job here is to explicitly describe this monodromy action.
With the above notation, 
the action of the ten generators $g_A$ on the set $\mc{L}(X_{\mathbb{1}})$
can be described in terms of $\eu{P}$ as follows (also see Figure \ref{fig-six-Petersens}):
\begin{theorem}[See \ref{theorem-monodromy} for a more detailed statement]
\label{theorem-monodromy-and-Petersen}
Let $A$ be a vertex of the Petersen graph $\eu{P}$. 
Let $\lbrace E_i \colon i \in \Z/6\Z \rbrace$ be the edges 
of the hexagon  in $\eu{P}$ that is not connected to $A$,  in cyclic order.
So $E_{i + 3}$ is the edge opposite $E_i$. 
Let $\pentagon(A, E_{i})$ be the unique pentagon in $\eu{P}$ containing
$A$ and $E_i$.
Then the monodromy action of $g_A$ on $\mc{L}(X_{\mathbb{1}})$
exchanges the lines $L_{E_i}$ and $L_{\pentagon(A, E_{i + 3})}$
for $i \in \Z/6\Z$. These twelve lines form a double-six configuration.
The remaining fifteen lines are fixed by $g_A$.
\end{theorem}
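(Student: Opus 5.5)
The plan is to reduce the monodromy of $g_A$ to a local computation near a nodal cubic. The generators $g_A$ of \ref{def-meridian-in-Sylvester-family} are meridians around the discriminant. Each is a loop in the Sylvester family, based at $X_{\mathbb{1}}$, that travels along a path $\gamma_A$ toward a one‑nodal cubic surface $X_A$, circles the discriminant once, and comes back. So I will not work with the whole loop. Instead I will use the classical fact that the monodromy of a simple loop around the nodal locus acts on $H_2$ as the Picard–Lefschetz reflection $x \mapsto x + \ip{x}{\delta}\delta$ in the vanishing cycle $\delta$, which is a root of $E_6$.

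On the $27$ lines this reflection acts as follows. It fixes the $15$ lines $L$ with $\ip{L}{\delta}=0$. The remaining $12$ lines satisfy $\ip{L}{\delta}=\pm 1$, and they form the double-six attached to $\delta$. The reflection swaps each line $a_j$ of this double-six with its partner $b_j$, where $b_j$ is the unique line of the other half that is skew to $a_j$. This already accounts for the shape of the statement: exactly twelve lines move, they form a double-six, and the action is an involution pairing each line with its skew partner.

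What remains is to identify which double-six belongs to $g_A$ and to check the pairing $L_{E_i}\leftrightarrow L_{\pentagon(A,E_{i+3})}$. I will do this in the following order.

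\begin{enumerate}
\item Write $X_{\mathbb{1}}$ in Sylvester (pentahedral) form. Its $S_5$ symmetry acts on $\eu{P}$, whose vertices are $2$‑subsets of $\{1,\dots,5\}$. The vertex $A$ then determines the degeneration: $X_A$ acquires a node where the coordinate pair indexed by $A$ collide.
\item Track the $27$ lines along $\gamma_A$ using the labeling of \ref{def-27lines}. The $15$ edge‑lines are given explicitly, for instance as lines such as $x_i+x_j=x_k+x_l=0$. The $12$ pentagon‑lines come in pairs that are Galois conjugate over $\Q(\sqrt5)$.
\item Determine which lines pass through the node of $X_A$. These are exactly the lines with $\ip{L}{\delta}\neq 0$; on a one‑nodal cubic there are six lines through the node, and each of them is the limit of two lines.
\end{enumerate}

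Combinatorially, I expect the twelve moving lines to be the six edges $E_i$ of the hexagon far from $A$ together with the six pentagons through $A$; a vertex lies on exactly six pentagons, so the count matches. To find the pairing I only need to know which of these pairs are skew, using the intersection rules for the edge/pentagon labels established earlier. The partner of $E_i$ is the one pentagon through $A$ that is skew to $E_i$, and I will check that this is $\pentagon(A,E_{i+3})$. Since all vertices are conjugate under $S_5$ and the construction is equivariant, it suffices to treat a single vertex $A$.

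The main obstacle is the second step, pinning down the vanishing cycle. One must actually verify, for the explicit path $\gamma_A$, which lines coalesce at the node, and this is where the Sylvester coordinates and the irrational pentagon lines have to be handled. My first approach is to parametrize the limit cubic $X_A$, solve directly for its six lines through the singular point, and match them with the limits of the lines of $X_{\mathbb{1}}$ by continuity along the real segment of $\gamma_A$. If this is carried out with explicit equations, the identification of the double-six then reduces to elementary calculation.
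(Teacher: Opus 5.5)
\textbf{A genuine gap: the pairing criterion fails.} Your overall route is genuinely different from the paper's and can be made to work. It takes the Picard--Lefschetz reflection in the vanishing cycle $\delta$, then locates the double-six through the lines that run into the node. However, the step meant to produce the pairing fails as stated.

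It is false that $L_{E_i}$ is skew to exactly one of the six pentagon-lines through $A$. Each half of the double-six of $\delta$ contains three edge-lines and three pentagon-lines, as recorded in part (2) of \ref{theorem-monodromy}. So $L_{E_i}$ is skew to the three pentagon-lines in its own half, and also to its partner.

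Concretely, take $A=\{4,5\}$. A direct computation shows that $L_{24,35}=\mathbb{P}(e_{24},e_{35})$ is skew to $L_{12345}$, $L_{21345}$, $L_{23145}$, $L_{31245}$, and meets only $L_{13245}$ and $L_{32145}$. The true partner $L_{21345}$ is one of four candidates, so your criterion cannot single it out. Also, the paper provides no intersection rules for the labels; you would have to compute them.

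\textbf{The criterion you need.} It is already implicit in your step (3). Suppose two lines of $X_{\mathbb{1}}$, transported along the real segment, converge to the \emph{same} line through the node. On the simultaneous resolution their classes are $\tilde\ell$ and $\tilde\ell+C$. Hence they differ by $\pm\delta$ and are exchanged by the reflection. Alternatively, first split the twelve lines into the two sixes, and then take the unique line of the opposite six that is skew to $L_{E_i}$.

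For $A=\{4,5\}$ the coalescence criterion is exactly the paper's computation along $w^+$. Both $L(0,0)=L_{24,35}$ and $L(-\beta,\beta)=L_{21345}$ tend to $L(\sqrt{1/6}-1/3,\,-1/3)$. Since $u_{ab}+v_{ab}=(-2b,1+b,1+b,-1,-1)$, at $b=-1/3$ this limit line contains the node $[2:2:2:-3:-3]$.

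\textbf{Transversality must be checked.} You also take for granted that $g_A$ is a simple meridian in the sense Picard--Lefschetz needs. The local monodromy is a single reflection only if the family $X_w$ crosses the discriminant transversally at $\wstar$. A tangential crossing of even order would give an even power of the reflection, which acts trivially on lines.

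Transversality does hold here. The derivative $\partial_w f_w=-3(x_4^3+x_5^3)$ is nonzero at the node; it equals $162$ at the representative $(2,2,2,-3,-3)$. In the paper this appears as the square-root branching of $z^2(2-z)=\epsilon e^{it}$ at the double root $\bstar=-1/3$ of $H(b)-\wstar$. But it has to be checked.

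\textbf{Comparison with the paper.} With these two repairs your argument is a valid alternative. The paper avoids Picard--Lefschetz altogether. It follows the root $b$ of $H(b)=w$ through a half-turn around $\bstar$, and then verifies the fifteen fixed lines family by family in (1a)--(1c).

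Your version gets the fixed lines and the double-six structure for free. Using the $S_3\times S_2$ symmetry of $\mc{X}^{45}$, which commutes with the monodromy, it needs only one coalescence computation. The price is invoking Picard--Lefschetz and the simultaneous-resolution description of lines on a one-nodal cubic.
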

\begin{figure}
\centerline{
\begin{tikzpicture}[
    scale=.64, 
    transform shape,
    vertex/.style={circle, draw, minimum size=1.8em, inner sep=0pt, font=\large},
    thick
  ]
  \foreach \i/\label in {1/13, 2/24, 3/35, 4/41, 5/52} {
    \node[vertex] (v\i) at ({90-(\i-1)*72}:3cm) {\label};
  }

  \foreach \i/\label in {1/45, 2/51, 3/12, 4/23, 5/34} {
    \node[vertex] (u\i) at ({90-(\i-1)*72}:1.5cm) {\label};
  }

  \draw (v1) -- (v2) -- (v3) -- (v4) -- (v5) -- (v1);

  \draw (u1) -- (u3) -- (u5) -- (u2) -- (u4) -- (u1);

  \foreach \i in {1,...,5} {
    \draw (v\i) -- (u\i);
  }

\end{tikzpicture}
}
\caption{The Petersen graph $\eu{P}$. The vertices are two element subsets of 
$\lbrace 1, 2, 3, 4, 5 \rbrace$, joined by an edge if disjoint.}
\label{figure-Petersen}
\end{figure}
Now let us describe the objects involved and the plan for our calculation
in more detail.
A cubic surface in $\mbb{P}^3 = \mbb{P}^3_{\C}$ is 
the zero set of a cubic form (homogeneous cubic polynomial) in four variables.
A cubic surface is \textit{stable} if it has at most ordinary double point singularities.
Let $V = \C^4$. Let $\op{sym}^3( V^*) \simeq \mbb{A}^{20}$ be the space of 
homogeneous cubic forms in four variables. Let
 $\op{sym}^3( V^*)_{\msf{st}}$ and $\op{sym}^3( V^*)_{\msf{sm}}$ be
 its subsets consisting of forms that define stable and smooth
 hypersurfaces respectively. Quotienting these spaces by the
  $\op{GL}(V)$ action one obtains the moduli spaces
$\mc{M}_{\msf{st}} $ of stable cubic surfaces and $\mc{M}_{\msf{sm}}$ of smooth cubic surfaces.
The moduli space $\mc{M}_{\msf{st}}$ is a four dimensional complex orbifold
in which the non-smooth surfaces form a divisor $\Delta$ called the \textit{discriminant divisor}.
One has $\mc{M}_{\msf{sm}} = \mc{M}_{\msf{st}} - \Delta$.
 \par
To describe $\mc{M}_{\msf{st}}$ as a ball quotient following \cite{act:cubic_surfaces},
start with the ring $\mc{E} = \Z[\zeta_6]$ of Eisenstein integers where
$\zeta_6 = e^{2 \pi i /6}$. An $\mc{E}$-lattice $K$ means a free $\mc{E}$-module of finite rank 
with a nonsingular hermitian form $\ip{\;}{\;}: K \times K \to \Q[\zeta_6]$. Write 
$\C K = \C \otimes_{\mc{E}} K$.
Let $ v^2 :=  \ip{v}{v}$ be the \textit{norm} of $v$.
The \textit{dual of $K$}, denoted 
$K^*$, means the set of all $x \in \C K$ such that 
$ \ip{x}{K} \subseteq \eu{E}$.
 Let $L $ be the unique self-dual $\mc{E}$-lattice of signature $(4, 1)$.
 \par
The set of complex lines of negative norm vectors in $\C L$, denoted by 
$\mbb{B}(L) \subseteq \mbb{P}( \C  L)$,  is topologically a complex $4$-ball.
The group $P U(\C L) \simeq PU(4,  1)$ acts faithfully on $\mbb{B}(L)$
preserving a unique up-to scaling metric of negative curvature.
The ball $\mbb{B}(L)$ with this metric is called the complex hyperbolic space of dimension $4$.
The elements of 
\[
L(1) := \lbrace v \in L \colon v^2 = 1 \rbrace
\] 
are called the \textit{short roots} (of $L$).
The group $\op{Aut}(L) \subseteq U(4, 1)$ contains complex $\zeta_6$-reflections in the 
short roots.
A  complex $\zeta_6$-reflection $R_v^{\zeta_6}$ in $v \in L(1)$  has order $6$; it 
pointwise fixes the hyperplane $v^{\bot}$
and multiplies by $\zeta_6$ in the complex line $\C v$. The hyperplane $v^{\bot}$ or the sub-ball
$\mbb{B}(v^{\bot})$ in $\mbb{B}(L)$ is called the \textit{mirror} of the reflection $R_v^{\zeta_6}$.
 Let $\Gamma \subseteq P \op{Aut}(L)$ be the subgroup generated by the $\zeta_6$-reflections in the
 short roots.
 Let $\eu{H} \subseteq \mbb{B}(L)$ be the union of the mirrors of the reflections in $\Gamma$
 and let $\mathbb{B}(L)^{\circ} = \mathbb{B}(L) - \eu{H}$.
 The image $\Gamma \backslash \eu{H}$ of the mirrors in the orbifold $ \Gamma \backslash \mbb{B}(L)$
is an irreducible divisor and $ \Gamma \backslash \mbb{B}(L)^{\circ} $ is the complement of this divisor.
In \cite{act:cubic_surfaces}, Allcock-Carleson-Toledo, proved the following theorem: 

\begin{theorem}[\cite{act:cubic_surfaces}]
\label{theorem-ACT}
One has an isomorphism of complex orbifolds $\mc{M}_{\msf{st}} \simeq \Gamma \backslash \mbb{B}(L)$ 
and this restricts to an isomorphism
$\mc{M}_{\msf{sm}} \simeq \Gamma \backslash \mbb{B}(L)^{\circ}$.
\end{theorem}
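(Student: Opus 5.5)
The plan is to follow the strategy of Allcock--Carlson--Toledo and build the isomorphism as a period map coming from an auxiliary cubic threefold. To a cubic form $F(x_0,\dots,x_3)$ we attach the cubic threefold $T_F = \lbrace F(x_0,\dots,x_3) + x_4^3 = 0\rbrace \subseteq \mbb{P}^4$. It carries the automorphism $\sigma\colon x_4 \mapsto \omega x_4$ of order $3$, where $\omega = \zeta_6^2$. Since $H^{3,0}(T_F)=0$, we have $H^3(T_F,\Z) \simeq \Z^{10}$ with intersection form, and the action of $\sigma$ has no nonzero fixed vectors. So $\sigma$ makes $H^3(T_F,\Z)$ into a free $\mc{E}$-module of rank $5$, with $\zeta_6$ acting as $-\sigma^2$.

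The first step is to build a hermitian form from the unimodular alternating form on $H^3(T_F,\Z)$, essentially $\ip{x}{y} = \tfrac{1}{2}\bigl(\theta(\sigma x,y)-\theta(\sigma^2 x,y)\bigr) + \tfrac{1}{\sqrt{-3}}\,\theta(x,y)$ up to normalization. Unimodularity of the intersection form gives self-duality over $\mc{E}$. The signature is computed from the Hodge decomposition: on one eigenspace of $\sigma$, $H^{2,1}$ is $4$-dimensional and $H^{1,2}$ is $1$-dimensional, which gives signature $(4,1)$. By the classification of self-dual $\mc{E}$-lattices, the result is isometric to $L$. For a framed smooth $F$, the line $H^{1,2}_\omega \subseteq \C L$ is negative definite, so it is a point of $\mbb{B}(L)$. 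This defines a holomorphic, $\op{GL}(V)$-invariant period map from framed smooth cubic forms to $\mbb{B}(L)$.

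The second step is local: show that the period map is a local isomorphism and identify the monodromy. Local Torelli comes from Griffiths' Jacobian ring description of $H^{2,1}$, restricted to $\sigma$-eigenspaces; it gives an isomorphism between the $4$-dimensional tangent space to moduli and the tangent space to the ball. For monodromy, a nodal cubic surface gives an $A_2$ singularity on $T_F$. The Picard--Lefschetz formula for the threefold shows that a small loop around the discriminant acts as a $\zeta_6$-reflection (an order $6$ triflection up to sign) in the vanishing cycle, and this vanishing cycle is a short root. Since the discriminant complement is connected and the discriminant is irreducible, the monodromy group is generated by these reflections. A further check, on the order of the finite group $P\op{Aut}(L)/\Gamma$, or a direct argument that all short roots occur, identifies its image with $\Gamma$. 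This yields a map $\mc{M}_{\msf{sm}} \to \Gamma\backslash\mbb{B}(L)^{\circ}$.

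The third step globalizes. The period map extends across nodal surfaces by Riemann extension, since the monodromy there is finite, and sends them into $\Gamma\backslash\eu{H}$. Injectivity comes from Torelli for cubic threefolds (Clemens--Griffiths, Mumford, Beauville) together with the $\sigma$-action, which recovers $F$ up to projective equivalence. I expect the main obstacle to be properness and surjectivity. One must analyze degenerations toward the boundary of $\mc{M}_{\msf{st}}$ inside the GIT compactification, namely the strictly semistable point, which should correspond to the unique cusp of $\Gamma\backslash\mbb{B}(L)$ in the Baily--Borel compactification. One must also show that nothing else escapes to infinity. I would try to establish this by comparing the GIT and Baily--Borel compactifications: the extended map is a finite, bimeromorphic map of normal compact spaces, and Zariski's main theorem then makes it an isomorphism that restricts to the claimed isomorphism of orbifolds.
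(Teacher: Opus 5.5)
The paper does not prove this statement; it quotes it from Allcock--Carlson--Toledo. Your outline is their strategy: the cyclic cubic threefold $T_F$, the $\mc{E}$-structure from $\sigma$, local Torelli via the Jacobian ring (indeed $P\mapsto x_4P$ embeds $R(F)_3$ into $R(F+x_4^3)_4$), order-$6$ local monodromy at the $A_2$ points, Torelli for injectivity, and a comparison of the GIT and Baily--Borel compactifications. As written, though, several load-bearing steps are asserted rather than proved.

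The first is ``this yields a map $\mc{M}_{\msf{sm}}\to\Gamma\backslash\mbb{B}(L)^{\circ}$''. Monodromy and local Torelli only give a map to $\Gamma\backslash\mbb{B}(L)$, and nothing in your list keeps the period of a smooth surface off the mirrors. The input that does this is the irreducibility of the intermediate Jacobian of a smooth cubic threefold (Clemens--Griffiths). Suppose $r\in L(1)$ were orthogonal to the period line. Because $r^2=1$, $L=\mc{E}r\oplus r^{\bot}$ is an orthogonal splitting into unimodular pieces, and $r\perp H^{1,2}_{\omega}$ makes it a splitting of polarized Hodge structures. Hence $J(T_F)\simeq E\times A$ as principally polarized abelian varieties, with $E$ the elliptic curve with $\mc{E}$-multiplication. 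This contradicts the theorem that the theta divisor of $J(T_F)$ has a single singular point, since a product would have a $3$-dimensional singular locus. Alternatively, you can drop the claim and recover it at the end. Once $\mc{M}_{\msf{st}}\to\Gamma\backslash\mbb{B}(L)$ is known to be a bijection carrying $\Delta$ into the irreducible divisor $\Gamma\backslash\eu{H}$, the closed $3$-dimensional image of $\Delta$ must be all of $\Gamma\backslash\eu{H}$.

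In the global step, ``finite'' is also unjustified. Riemann extension across $\Delta$ says nothing about positive-dimensional fibres inside $\Delta$, and generic injectivity does not rule them out. There are two fixes:
\begin{itemize}
\item The GIT compactification is $\mbb{P}(1,2,3,4,5)$, which has Picard number one, so a nonconstant morphism from it to the projective Baily--Borel compactification contracts no curve.
\item A local analysis at $k$-nodal surfaces: show that the lifted period map on the $(\mu_6)^k$-cover branched along the $k$ local sheets of $\Delta$ is a local biholomorphism.
\end{itemize}
Either way you still need the cusp step, which your sketch leaves open. You must show that stable surfaces tending to the strictly semistable point $xyz=w^3$ have periods tending to the cusp, i.e.\ that the corresponding one-parameter degenerations have infinite monodromy.

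Zariski's main theorem gives only an isomorphism of analytic spaces, and along $\Delta$ that is all one can have. The quotient $\Gamma\backslash\mbb{B}(L)$ has $\mu_6$ isotropy along $\Gamma\backslash\eu{H}$, while a generic one-nodal cubic has no automorphisms. So the first half of the statement holds only as an isomorphism of analytic spaces. The orbifold isomorphism on $\mc{M}_{\msf{sm}}$ must come from your framed local Torelli plus the strong form of Torelli (the $\Gamma$-stabilizer of a period point is the image of the automorphism group of the surface), not from Zariski's theorem.

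Two smaller slips. First, once the real part $\tfrac12(\theta(\sigma x,y)-\theta(\sigma^2x,y))$ is fixed, $\C$-linearity forces the other term to be $\tfrac{\sqrt{-3}}{2}\theta(x,y)$ (up to sign convention), not $\tfrac{1}{\sqrt{-3}}\theta(x,y)$. Second, the order of $P\op{Aut}(L)/\Gamma$ is irrelevant to identifying the monodromy group; what matters is the index of the monodromy group in $\Gamma$.
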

Let $\tau$ be a point in $\mbb{B}(L)$ whose image in $\Gamma \backslash \mbb{B}(L)$ is 
the Clebsch surface $X_{\mbb{1}}$. Let 
\[
G = \pi_1^{\op{orb}}(\Gamma \backslash \mbb{B}(L)^{\circ} , \tau)
 = \pi_1^{\op{orb}}( \mc{M}_{\msf{sm}}, X_{\mathbb{1}}).
\]
It is well known that the orbifold fundamental group $G$
has Artin group-like presentations \cite{Li, Lo}.
Investigating the  ball quotient $\Gamma \backslash \mbb{B}(L)$ near the point $\tau$
led us to a nice presentation of 
$\pi_1^{\op{orb}}(\Gamma \backslash \mbb{B}(L)^{\circ} , \tau)$.
To describe it, 
let $\eu{P} = K(2, 5)$ be the Petersen graph whose vertices are the two element subsets of 
$\lbrace 1, 2, 3, 4, 5 \rbrace$
and where two vertices are joined if and only if the two subsets are disjoint.
For each vertex $\lbrace i, j \rbrace$ of $\eu{P}$, we'll specify a short root $\alpha_{i j}$ in $L$.
Let $R_{i j} = R_{\alpha_{i j}}^{\zeta_6}$.
It turns out that the ten mirrors $\alpha_{i j}^{\bot}$ of $\Gamma$ are precisely the
mirrors of $\Gamma$ that are closest to $\tau$ and equidistant from 
$\tau$ and $\Gamma$ is generated by  the ten complex reflections $R_{i j}$.
For each vertex $\lbrace i, j \rbrace$ of $\eu{P}$, let
$(\mu_{i j}, R_{i j})$ be the  canonical element of 
$\pi_1^{\op{orb}}(\Gamma \backslash \mbb{B}(L)^{\circ} , \tau)$
 that is represented by a path $\mu_{i j}$ that
 goes around the mirror $\alpha_{i j}^{\bot}$. These canonical loops are called 
 \textit{meridians} (in the ball model)  around $\alpha_{i j}^{\bot}$; they are carefully defined in 
 \cite{AB2} section 3.
In the forthcoming article \cite{ABL} we prove that the ten meridians $(\mu_{i j}, R_{i j})$,
(or rather their explicit incarnations $g_{i j}$ in the Sylvester family defined in \ref{def-meridian-in-Sylvester-family}) 
generate 
the fundamental group $G$ 
and that this leads to a presentation of $G$ as a quotient of the Artin group of $\eu{P}$.
To the best of our knowledge, \cite{ABL} is the first work that highlights
the importance of the 
subgroup $\op{Aut}(\eu{P}) = S_5 \subseteq W(E_6)$ 
in understanding the whole moduli space $\mc{M}_{\msf{sm}}$.
\par
In this paper, we explicitly describe the monodromy action of $G$ on the $27$ lines 
$\mc{L}(X_{\mbb{1}})$ of the Clebsch surface $X_{\mbb{1}}$.
This is possible because of the following facts: 
an open subset of $\mc{M}_{\msf{st}}$ can be represented 
by a four parameter family $\mc{X}$ of cubic forms (called the Sylvester family) 
such that $\mc{X} \to \mc{M}_{\msf{st}}$ is a $S_5$-cover. 
It turns out that, locally near  $\tau$, the map $\mbb{B} (L) \to \eu{M}_{\msf{st}}$ is also an $S_5$-cover. 
So there is a open neighborhood of 
$\tau$ in $\mbb{B}(L)$ such that every point in it can be uniquely represented by a form in the Sylvester family
and this identification is $S_5$ equivariant. Using this, we write down
ten explicit loops $g_{i j}$'s in the space $\mc{X}$ based at $X_{\mathbb{1}}$
that correspond to the $(\mu_{i j}, R_{i j})$'s and then work out how the $27$ lines on $X_{\mathbb{1}}$
move as we go around these ten loops. 
This yields our main theorem \ref{theorem-monodromy-and-Petersen}
or rather its more detailed version \ref{theorem-monodromy}. In  theorem \ref{theorem-monodromy-and-Petersen}
we write $g_A = g_{i j}$ if $A = \lbrace i , j \rbrace$ is a vertex of $\eu{P}$.
\par
We describe easy to remember names of the $27$ lines on $X_{\mathbb{1}}$ and 
the action of the ten generators $g_{i j}$ are written down as ten explicit permutations of $\mc{L}(X_{\mbb{1}})$.
One verifies easily that these ten permutations generate
the full Weyl group $W(E_6)$; see \cite{HR}. 
Because of the $S_5$ symmetry of the entire set-up, one only needs to compute
the action of a single $g_{i j}$ on $\mc{L}(X_{\mbb{1}})$. 
The ball quotient description of the moduli space from 
 \cite{act:cubic_surfaces} was only used to find the specific generators $g_{i j}$ 
 of the fundamental group $G$.
Our proofs of theorems \ref{theorem-monodromy-and-Petersen}, \ref{theorem-monodromy}.  
are completely elementary calculations,
and the ball quotient description play no role in the proof.
As expected, all of the geometry can be described in terms of the
combinatorics on the Petersen graph.
For a recent calculation of monodromy action of $G$ on a family of cubic surfaces branching 
over  smooth cubic curves, see \cite{Med}; in that case the image turns out
to be much smaller than $W(E_6)$.
\par
This paper is organized as follows.
Section  \ref{section-complex} is 
preparatory.
The $4$-ball quotient parametrizing $\mc{M}_{\msf{st}}$ and the ten generators
of the monodromy group $\Gamma$ is described in section \ref{section-ball}.
The Sylvester family and a neighborhood of the point $X_{\mathbb{1}}$ with 
the ten possible degeneration of the cubic form of $X_{\mathbb{1}}$ corresponding to the $10$ mirrors
of the meridians $g_{i j}$ are described in section \ref{section-Clebsch}.
After these preparations, the main computation of the action of $g_{i j}$ on the 
$\mc{L}(X_{\mathbb{1}})$ is carried out in section \ref{section-monodromy}.
%
%
%
\section{Preparation}
\label{section-complex}
Lemma \ref{l-monodromy-illustration}  and remark \ref{remark-Z-mod-2-monodromy}
 illustrates the idea of monodromy as used in this article. They are included only for 
 expository purposes and can be safely skipped. 
\begin{lemma}
\label{l-monodromy-illustration}
Let $w: [0, 1] \to \C$ be the parametrized unit circle  $w(t) = e^{2 \pi i t}$.
Consider the one parameter family of degree $n$ polynomials in a variable $x$ given by
$f_t(x) = x^n - w(t)$. 
\begin{enumerate}
\item There exists unique continuous function $a(t): [0, 1 ] \to \C$
satisfying the conditions that $a(t)$ is a solution of $f_t(x) = 0$ and that $a(0) = 1$.
\item Assume $n = 2$. There exists unique continuous functions $a(t), b(t): [0, 1 ] \to \C$
satisfying the conditions that $f_t(a(t)) = f_t(b(t)) = 0$ and $(a(0), b(0)) = (1, -1)$.
\end{enumerate}
\end{lemma}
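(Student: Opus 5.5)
Both parts come down to the same two facts: existence by writing down explicit solutions, and uniqueness because the roots of $f_t$ are distinct and stay a fixed positive distance apart for all $t$.

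\emph{Existence.} For (1) I take $a(t) = e^{2\pi i t/n}$. It is continuous, satisfies $a(t)^n = e^{2\pi i t} = w(t)$, and has $a(0)=1$. For (2) I take $a(t) = e^{\pi i t}$ and $b(t) = -e^{\pi i t}$. Both are continuous, both square to $w(t)$, and $(a(0),b(0)) = (1,-1)$.

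\emph{Uniqueness.} Let $\tilde a$ be any continuous solution of (1). Since $a(t) \neq 0$, the quotient $q(t) = \tilde a(t)/a(t)$ is continuous on $[0,1]$. Also $q(t)^n = w(t)/w(t) = 1$, so $q$ takes values in the finite, discrete set $\mu_n$ of $n$-th roots of unity. The interval $[0,1]$ is connected, so $q$ is constant, and $q(0) = \tilde a(0)/a(0) = 1$. Hence $\tilde a = a$.

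Part (2) is the case $n=2$ of (1) applied to each function separately. The function $a$ is unique with $a(0)=1$ by the argument above. For $b$, the same argument with initial value $-1$ shows that $\tilde b/b$ is a continuous map into $\{\pm1\}$ equal to $1$ at $t=0$, hence identically $1$.

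The only step that needs any care is making sure the quotient is well defined, which holds because the roots never vanish ($w(t)\neq 0$). Nothing else is an obstacle.

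For the expository point of the lemma, note that $a(1) = e^{2\pi i/n}$. So going once around the loop permutes the roots cyclically. For $n=2$ it swaps them: $(a(1), b(1)) = (-1, 1)$.
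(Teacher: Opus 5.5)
Your proof is correct and takes the same route as the paper: the explicit branch $a(t)=e^{2\pi i t/n}$ (with $b=-a$ when $n=2$), with continuity and the initial value forcing that branch. Your quotient argument ($\tilde a/a$ is a continuous map into the discrete set of $n$-th roots of unity, hence constant) makes explicit the step the paper leaves implicit, and it is the same device the paper uses for uniqueness in Lemma \ref{l-monodromy-of-roots-lemma}.
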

\begin{proof}
The $n$ roots of $f_t(x) $ are $e^{2 \pi i (k + t) /n }$ where $k = 0, 1, \dotsb, n -1$ . 
The condition $a(0) = 1$ together with continuity
of $a(t)$ implies that $a(t)$ must be the solution branch $a(t) = e^{2 \pi i t/n}$. This proves part (a).
Part (b) is immediate from part (a) since $a(t) + b(t)  =0$ for all $t$.
\end{proof}
\begin{remark}
\label{remark-Z-mod-2-monodromy}
Let $\C^2 \simeq P_2 \subseteq \C[x]$ be the space of polynomials of the form $f = x^2 - s_1 x + s_0$.
Define $\tilde{P}_2  \subseteq  P_2 \times \C $ by $\tilde{P}_2 = \lbrace (f, c) \colon f(c) = 0 \rbrace$.
This defines the canonical ramified double cover 
\[
\tilde{\pi} : \tilde{P}^2 \to P_2
\]
 where ``the fiber over $f$ are the roots of $f$".
To realize this double cover another way, let $\pi: \C^2 \to P_2$ be the map 
$\pi : (a, b) \mapsto f_{a, b}(x) = (x - a)( x - b)$. Now the preimage of $f_{a, b}$ are $(a, b)$
and $(b, a)$.
We can identify these two double covers
of $P_2$ via the isomorphism $ \C^2 \to \tilde{P}_2$ defined by $(a, b)  \mapsto (f_{a, b}, a)$.
So we can identify 
\[
P_2 \simeq \C^2/S_2 = \lbrace \text{space of two points on $\C$} \rbrace
\]
 where $f_{a, b}$ is identified with the
``unordered pair" $(a, b) S_2$.
The preimage of  $f_{a, b}$ in $\C^2$  is $\lbrace (a, b), (b, a) \rbrace$
and the preimage of $f_{a, b}$ in  $\tilde{P}_2$ is 
$\lbrace (f_{a, b}, a), (f_{a, b}, b) \rbrace$. 
Let $\tilde{\Delta}$ be the diagonal in $\C^2$ and let $\Delta$ be its image in $P_2$. So
$\Delta = \lbrace (x - a)^2 \colon a \in \C \rbrace$. Write $P_2^{\circ} = P_2 - \Delta$.
\par
The fundamental group of $P^{\circ}_2$ is the two strand braid group $B_2 \simeq \Z$ generated by
a nontrivial loop $\mu$ that can be represented by the 
 family $f_t(x)= x^2 - w(t)$ based at $f_0$.
 The lemma above says that as we go around this loop $\mu$ 
  in $P_2$, upstairs in $\tilde{P}_2$, 
 the two points in the fiber can be uniquely continuously moved (in $\pi^{-1}(P_2^{\circ})$)
along the paths $a(t), b(t)$, so that when we
 come back to the base point $\lbrace 1, -1 \rbrace$ the two points in the fiber get exchanged, 
 that is, $a(1) = b(0) = -1$ and $b(1) = a(0) = 1$. This \textit{is} the monodromy
 action of the  braid group
 $B_2 = \langle \mu \rangle \simeq \Z$ on the fiber $\lbrace 1, -1 \rbrace = \pi^{-1}(f_0)$
 as the symmetric group $S_2 \simeq \Z/2$.
 Of course this discussion immediately generalizes to the symmetric group $S_n$ and the
 n-strand braid 
 group $B_n$.
 The example studied in this paper is more complicated, but similar in spirit. Indeed, the fundamental group of 
 the space of smooth cubic surfaces in $\mathbb{P}^3$ 
 studied below is ``braid-like" in the sense of \cite{AB1}. 
 The monodromy action of $g_A$'s in theorem \ref{theorem-monodromy-and-Petersen}
  was worked out while finding a braid-group like presentation for this fundamental group in \cite{ABL}.
 \end{remark}
 We end this section with a small lemma needed for proof of theorem \ref{theorem-monodromy}.
 \begin{lemma}
 \label{l-monodromy-of-roots-lemma}
Let  $f(z)$ be a holomorphic function with a zero of order $m \geq 1$ at some 
$c \in \C$.
There is a neighborhood $U$ of $c$ and a local analytic isomorphism 
$\varphi: U \to \varphi(U)$ such that $\varphi(c) = 0$ and $f(z) = \varphi(z)^m$.
Given a sufficiently small $\epsilon > 0$, the anti-clockwise loop 
$w(t) = \epsilon e^{ i t} $ for $0 \leq t \leq 2 \pi$,
and a solution  $z_0 \in U$ for the equation $f(z) = w(0)$, 
there is a unique continuous curve 
$z: [0, 2 \pi] \to U$ 
satisfying $z(0) = z_0$ and $f(z(t)) = w(t)$ for all $t$.
Explicitly, one has 
\begin{equation}
z(t) = \varphi^{-1}( e^{i t /m} \varphi(z_0 )) \text{\; for \;} t \in [0, 2 \pi].
\label{eq-def-z}
\end{equation}
In particular, $\varphi( z(2 \pi)) = e^{2 \pi i/m} \varphi(z_0)$. 
\end{lemma}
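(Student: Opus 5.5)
We prove the two assertions in order: first the local normal form $f = \varphi^m$, then the path-lifting statement with its explicit formula.

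\emph{Local normal form.} Since $f$ has a zero of order $m$ at $c$, we can write $f(z) = (z-c)^m h(z)$ with $h$ holomorphic and $h(c) \neq 0$. On a small disk around $c$ the function $h$ does not vanish, so it has a holomorphic $m$-th root $k$ with $k^m = h$; for instance $k = \exp(\tfrac{1}{m}\log h)$ for a branch of $\log$ near $h(c)$. Set $\varphi(z) = (z-c)k(z)$. Then $\varphi(c) = 0$, $\varphi'(c) = k(c) \neq 0$ and $f = \varphi^m$. By the inverse function theorem, $\varphi$ is biholomorphic from some neighborhood $U$ of $c$ onto an open set containing a disk $D_r = \{\abs{u} < r\}$. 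Shrinking $U$, we may take $\varphi(U) = D_r$.

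\emph{Path lifting.} Take $0 < \epsilon < r^m$. For $z \in U$ put $u = \varphi(z) \in D_r$. The equation $f(z(t)) = w(t)$ becomes $u(t)^m = \epsilon e^{it}$, where $u(t) = \varphi(z(t))$. The curve $u(t) = e^{it/m}u_0$, with $u_0 = \varphi(z_0)$, satisfies this equation and $u(0) = u_0$. It stays in $D_r$ because $\abs{u(t)} = \epsilon^{1/m} < r$. This gives existence, and $z(t) = \varphi^{-1}(u(t))$ is exactly formula \eqref{eq-def-z}.

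\emph{Uniqueness.} Let $\tilde z$ be any continuous solution in $U$, and put $\tilde u = \varphi \circ \tilde z$. Then $\tilde u(t)^m = u(t)^m$ and $u(t) \neq 0$. Hence $q(t) = \tilde u(t)/u(t)$ is a continuous function with values in the finite set of $m$-th roots of unity. Since $[0,2\pi]$ is connected, $q$ is constant, and $q(0) = 1$. So $\tilde u = u$, and since $\varphi$ is injective on $U$, $\tilde z = z$. This is the same argument as in Lemma \ref{l-monodromy-illustration}.

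\emph{Endpoint.} Setting $t = 2\pi$ in the formula gives $\varphi(z(2\pi)) = e^{2\pi i/m}\varphi(z_0)$.

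The only point needing care is choosing $U$ so that $\varphi(U)$ is a disk and the whole lifted curve stays in $U$. This is handled by taking $\epsilon^{1/m} < r$.
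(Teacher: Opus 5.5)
Your proof is correct and follows the paper's argument essentially step for step. It uses the same normal form $\varphi(z) = (z-c)k(z)$ with $k^m = h$, the same explicit lift $\varphi^{-1}(e^{it/m}\varphi(z_0))$, and the same uniqueness argument, in which the ratio $\varphi(\tilde z(t))/\varphi(z(t))$ is continuous with values in the $m$-th roots of unity; your normalization $\varphi(U) = D_r$ with $\epsilon^{1/m} < r$ is just a tidier form of the paper's requirement that the circle $\abs{u} = \epsilon^{1/m}$ lie in $\varphi(U)$.
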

\begin{proof}
For $z$ in a small neighborhood $U$ of $c$, one can write 
$f(z) = (z - c)^m h(z)$ for some holomorphic $h$ that is defined on $U$
and does not vanish on $U$. 
Choose a holomorphic $m$-th root $g$ of $h$ and let 
$\varphi(z)=(z - c) g(z)$.
Then $\varphi(c)=0$ and $\varphi'(c)=g(c)\neq 0$. So, after possibly shrinking $U$, 
$\varphi : U \to \varphi(U)$ is a local analytic isomorphism and
$f(z)=(z-c)^m g(z)^m=\varphi(z)^m$ for all $z \in U$.
\par
Now choose $\epsilon>0$ sufficiently small to ensure that 
$ \lbrace u \in \C \colon \abs{u} = \epsilon^{1/m} \rbrace \subseteq \varphi(U)$.
Let $z_0 \in U$ be such that $f(z_0) = \epsilon$\footnote{Note that there are $m$ solutions of $f(z) = \epsilon$ inside $U$, namely
$\varphi^{-1}( \epsilon^{1/m} e^{2 \pi i r/m})$, with $r = 0, 1, \dotsb, m -1$.}.
Note that $\abs{\varphi(z_0)} = \epsilon^{1/m}$.
Define $z: [0, 2 \pi] \to U$ by equation \eqref{eq-def-z}.
This is a continuous curve in $U$ satisfying $z(0)=z_0$ and 
$f(z(t)) = w(t)$ for all $t \in [0, 2 \pi]$. This proves the existence of  $z(t)$.
\par
To prove uniqueness, let $z_1: [0, 2 \pi] \to U$ be another such curve. 
Then 
\[
\varphi( z_1(t))^m = f(z_1(t)) = w(t) = f(z(t)) =  \varphi( z(t))^m.
\] 
So $\varphi(z_1(t))/ \varphi(z(t))$ takes value in the $m$-th roots of unity, hence must be constant.
But $z_1(0) = z(0)$, so $\varphi(z_1(0)) = \varphi(z(0))$ and this implies 
$\varphi(z_1(t)) = \varphi(z(t))$ for all $t$. 
Since $\varphi$ is a local isomorphism on $U$, it follows that
$z_1(t) = z(t)$ for all $t$. 
\end{proof}
%
%
\section{The ball model and the monodromy group}
\label{section-ball}
\begin{definition}[A convenient model for $\mathcal{E}^{4,1}$ with visible $S_5$ symmetry]
Let $\mathcal{E}^{n,1}$ denote the rank $(n+1)$ free $\mathcal{E}$-module $\mathcal{E}^{n+1}$ with the
 hermitian form
\begin{equation*}
\ip{(x_0; x_1, \dotsb, x_n)}{(y_0; y_1, \dotsb, y_n)} = -x_0\bar{y}_0 + x_1 \bar{y}_1 + \dotsb + x_n \bar{y}_n.
\end{equation*}
Let $v_0 = (2; 1,1,1,1,1) \in \mathcal{E}^{5,1}$ and let 
\[
L= v_0^{\bot} = \lbrace (x_0; x_1, \dotsb,x_5) \in \mathcal{E}^{5,1} \colon x_1 + \dotsb + x_5 = 2 x_0 \rbrace.
\]
Since $v_0^2 =1$, one has $L \simeq \mathcal{E}^{4,1}$. 
Let $\mathbb{C} L := \mathbb{C} \otimes_{\mathcal{E}} L$ be the underlying complex vector space of $L$.
Let $\mathbb{P}: \mathbb{C} L \to \mathbb{P}( \mathbb{C} L)$ be the projection map. 
Recall that $\mathbb{B}(L)$ denotes the set of negative norm lines in the underlying complex vector space $\C L$
of $L$.
The ball $\mathbb{B}(L) \simeq \mathbb{B}^4_{\C}$ with the unique (up-to scale) $U(4,1)$ 
invariant metric is called the complex hyperbolic $4$-space.
The symmetric group $S_5$ acts on $L$ by permuting the coordinates $x_1, \dotsb, x_5$,
and hence acts on the ball $\mathbb{B}(L)$.
\end{definition}
\begin{definition}[roots and mirrors]
Vectors in $L(1) = \lbrace v \in L \colon v^2 = 1 \rbrace$ 
are called the {\it short roots} of $L$ since order six complex reflections 
in these vectors preserve $L$. Given a short root $s \in L(1)$, let 
$R_s  = R_s^{\zeta_6} \in \operatorname{Aut}( \mathbb{C} L)$ 
be the complex reflection that pointwise fixes the hyperplane 
$s^{\bot}$ and acts on $\mathbb{C} s$ as multiplication by
the sixth root of unity $\zeta_6 = e^{2 \pi i /6}$.
Let $\op{R}(L)$ be the subgroup of $\op{Aut}(L)$ generated by these order six complex reflections.
The group $\Gamma := \mathbb{P} R(L) \subseteq \mathbb{P} \! \op{U}(4,1)$ acts faithfully on $\mathbb{B}(L)$.
For each short root $s \in L(1)$, the totally geodesic hypersurface $\mathbb{B}(s^{\bot}) \simeq \mathbb{B}^3$
in $\mathbb{B}(L) \simeq \mathbb{B}^4$ is pointwise fixed by the complex reflections $R_s$; 
this hypersurface is called the {\it mirror} of $s$ (or of $R_s$).
Given $x, y, v \in \mathbb{C} L$ with $x^2 < 0$, $y^2 < 0$ and $v^2 > 0$,  one has:
\begin{equation*}
\cosh^2 d(x, y) = \frac{\ip{x}{y} \ip{y}{x}}{ \ip{x}{x} \ip{y}{y} }
 \text{\; and \;}
\sinh^2 d(x, v^{\bot}) =  -\frac{\ip{x}{v} \ip{v}{x}}{ \ip{x}{x} \ip{v}{v} }.
\end{equation*}
On the left hand side of these formulae, we have abbreviated $x$ instead of $\mathbb{P}(x)$ and
$v^{\bot}$ instead of $\mathbb{B}(v^{\bot})$ etc. We'll do this if there is no chance of
confusion.
\end{definition}
\begin{lemma}
\label{l-first-shell-around-tau}
 There is a unique point in $\mathbb{B}(L)$ fixed by $S_5$, represented by the vector
\begin{equation*}
\tau = (5; 2, 2, 2, 2, 2).
\end{equation*}
One has $\tau^2 = -5$. If $r = (r_0; r_1, \dotsb, r_5) \in L$, then $\ip{r}{\tau} = -r_0$.
The point $\tau$ is not on any short root mirror.
There are ten short root mirrors that are closest to $\tau$ and equidistant from it; namely the mirrors of 
$\alpha_{\lbrace i, j \rbrace} = e_0 + e_i + e_j$ where 
$e_0, e_1, \dotsb, e_5$ are the standard unit vectors in $\eu{E}^{5, 1}$ and
$\lbrace i, j \rbrace$ runs over the two element subsets of $\lbrace 1, 2, 3, 4, 5 \rbrace$.
\end{lemma}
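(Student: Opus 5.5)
The plan is to verify each assertion of Lemma \ref{l-first-shell-around-tau} by direct computation in the model $L = v_0^{\bot} \subseteq \mathcal{E}^{5,1}$, using only the distance formula $\sinh^2 d(x, v^{\bot}) = -\ip{x}{v}\ip{v}{x}/(\ip{x}{x}\ip{v}{v})$.

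\emph{The fixed point.} First I would check that $\tau \in L$: indeed $2+2+2+2+2 = 10 = 2\cdot 5$. Its norm is $\tau^2 = -25 + 5\cdot 4 = -5 < 0$, so $\tau$ is a point of $\mathbb{B}(L)$. Clearly $\tau$ is $S_5$-fixed. For uniqueness, a fixed point of $\mathbb{B}(L)$ is an $S_5$-stable line, i.e.\ a one-dimensional subrepresentation of $\C L$ on which $S_5$ acts by some character $\chi$ (trivial or sign).
\begin{itemize}
\item As an $S_5$-module, $\C^{6} = \C e_0 \oplus \C^{5}$ decomposes as $\text{triv}^{\oplus 2} \oplus \text{std}$, where $\text{std}$ is the irreducible $4$-dimensional representation.
\item Since $v_0$ is invariant, $\C L = v_0^{\bot}$ decomposes as $\text{triv} \oplus \text{std}$.
\item Hence the sign-isotypic part of $\C L$ is zero and the trivial-isotypic part is one-dimensional.
\end{itemize}
That one-dimensional trivial part is forced to be $\C\tau$, because the invariant vectors have the form $(x_0; a, a, a, a, a)$ and the constraint $5a = 2x_0$ pins them down to multiples of $\tau$.

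\emph{The inner product and the distances to mirrors.} For $r \in L$,
\[
\ip{r}{\tau} = -5r_0 + 2(r_1 + \dotsb + r_5) = -5r_0 + 4r_0 = -r_0 .
\]
Hence for a short root $r$, the distance formula gives
\[
\sinh^2 d(\tau, r^{\bot}) = \abs{r_0}^2/5 .
\]
Next I show $r_0 \neq 0$. If $r_0 = 0$, then $\sum \abs{r_i}^2 = 1$ with $r_i \in \mathcal{E}$, so exactly one $r_i$ is a unit and the rest vanish. Then $\sum r_i \neq 0$, contradicting $\sum r_i = 2r_0 = 0$. So $\tau$ lies on no mirror.

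\emph{The closest mirrors.} Nonzero Eisenstein integers have $\abs{r_0}^2 \geq 1$, so the minimal distance is attained exactly when $r_0$ is a unit. Since mirrors are unchanged by scaling $r$ by a unit, I may normalize $r_0 = 1$. Then $\sum_{i \ge 1} \abs{r_i}^2 = 1 + r^2 = 2$ and $\sum r_i = 2$. The norm condition forces either exactly two unit entries or a single entry of norm $2$. The second case is impossible, since $2$ is not a norm from $\mathcal{E}$ (norms are $a^2 - ab + b^2$, and these avoid $2$). So we have two units $u, u'$ with $u + u' = 2$; since $\abs{u}, \abs{u'} \le 1$, equality forces $u = u' = 1$. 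Thus $r = e_0 + e_i + e_j$, giving exactly ten mirrors. These are pairwise distinct, because their roots are not unit multiples of one another, and all lie at the same distance $\sinh^2 d = 1/5$ from $\tau$.

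The only mildly delicate step is the elimination of the $\abs{r_i}^2 = 2$ case together with the unit bookkeeping; all the rest is routine arithmetic.
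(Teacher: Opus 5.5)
Your proof is correct and complete. The paper gives no proof of this lemma; it defers to a forthcoming preprint, since the ball model plays no role in the main results. So there is nothing to compare against, and your direct verification supplies a self-contained argument.

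Each ingredient checks out:
\begin{itemize}
\item The uniqueness of the fixed point holds: a fixed point is an $S_5$-stable line, hence the trivial or sign character, and $\C L \cong \mathrm{triv}\oplus\mathrm{std}$ contains no sign part and a one-dimensional trivial part, which is $\C\tau$.
\item The identity $\ip{r}{\tau} = -r_0$ on $L$ is correct. It turns the distance into $\sinh^2 d(\tau, r^{\bot}) = \abs{r_0}^2/5$, so everything becomes arithmetic of Eisenstein norms.
\item The exclusion of $r_0 = 0$ works: the first five coordinates would then sum to a unit rather than to $0$.
\item The case $r_0=1$ is handled correctly, using that $2$ is not a norm from $\mathcal{E}$ and that two units can sum to $2$ only if both equal $1$.
\end{itemize}

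Your formula is also consistent with the paper's later identity $\ip{\alpha_B}{\tau} = -1$ in its Petersen-graph description of $L$. The same method would enumerate the further shells of mirrors around $\tau$, by sorting short roots by $\abs{r_0}^2 \in \{1,3,4,7,\dots\}$.
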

\begin{proof}
We refer to the forthcoming preprint \cite{BM} since we do not need it for the proof of our main results.
\end{proof}
\begin{definition}[The period lattice from Petersen graph]
The ``period lattice" $L$ can be constructed from the Petersen graph $\eu{P}$ as follows.
Let $\mathcal{E} \eu{P}$ be the free hermitian $\mathcal{E}$-module with basis 
$\lbrace \alpha_{A}^{\circ} \colon A \in \eu{P}  \rbrace$ 
indexed by the vertices of the graph $\eu{P}$
with hermitian form satisfying
\begin{equation*}
\ip{\alpha_{A}^{\circ}}{\alpha_{B}^{\circ}} =  
\begin{cases} 
1 & \text{\; if } A = B \\
-1 &\text{\; if } A \cap B = \emptyset \\
0  & \text{\; otherwise}. 
\end{cases}
\end{equation*}
The lattice $\mathcal{E} \eu{P}$ is singular 
with a five dimensional radical modulo which it is an integral $\mathcal{E}$-lattice of signature $(4,1)$.
To describe this radical, 
for each $A \in \eu{P}$ let
\begin{equation*}
\tau_{A}^{\circ} = 2 \alpha_{A}^{\circ} + \sum_{B \colon A \cap B = \emptyset } \alpha_{B}^{\circ}.
\end{equation*}
Verify that 
$\ip{\alpha_{B}^{\circ}}{\tau_{A}^{\circ}} = -1$ for all $ A \in \eu{P}$.
So if $A, A'$ are two distinct vertices of $\eu{P}$, then
$(\tau_{A}^{\circ} - \tau_{A'}^{\circ})$ are orthogonal to each $\alpha_{B}^{\circ}$, hence
$\tau_{A}^{\circ} - \tau_{A'}^{\circ} \in \operatorname{radical}(\mathcal{E} \eu{P})$
and it is easy to see that the $\mathbb{Q}(\zeta_6)$-span of these vectors equals $\mathbb{Q}(\zeta_6) \otimes_{\mathcal{E}} \op{radical}(\mathcal{E} \eu{P})$.
\par
Let $ \alpha_{i j} = \alpha_{\lbrace i, j \rbrace}$ be the 
 be the $10$ roots in lemma \ref{l-first-shell-around-tau}
indexed by the vertices of the Petersen graph $\eu{P}$.
Note that the vectors $\lbrace \alpha_A \rbrace$ and $\lbrace \alpha_A^{\circ} \rbrace$
have the same inner products. It follows that
 the map $\mathcal{E}\eu{P} \to L$ defined by $\alpha_{A}^{\circ} \mapsto \alpha_{A}$
induces an isomorphism 
\begin{equation*}
\mathcal{E} \eu{P}/ \operatorname{radical}( \mathcal{E} \eu{P}) \simeq L \simeq \mathcal{E}^{4,1}.
\end{equation*}
The $10$ vectors $\lbrace \tau_A^{\circ} \colon A \in \eu{P} \rbrace$ in $\mathcal{E} \eu{P}$ determine 
a unique point in $L$; call it $\tau$ (this is the $\tau$ in lemma \ref{l-first-shell-around-tau}). 
So if $A$ is any vertex of $\eu{P}$, then 
\begin{equation*}
\tau = 2 \alpha_{A} + \sum_{B \colon A \cap B = \emptyset } \alpha_{B}.
\end{equation*}
\end{definition}
 \begin{definition}[The orbifold fundamental group]
Let $\op{Cox}(\eu{P}, n)$ denote the quotient of the Artin group of the graph $\eu{P}$ by
the relation that all ten Artin generators have order $n$.
Let $s, s' \in L(1)$ be two distinct short roots and let $R, R'$ be the $\zeta_6$-reflections in these.
One verifies that if $\ip{s}{s'} = 0$, then 
$R R' = R' R$ and if $\abs{\ip{s}{s'}} = 1$, then 
$R R' R = R' R R'$.
In particular, we have a map from $\op{Cox}(\eu{P}, 6)$
to $\Gamma$
obtained by sending the generators to the $\zeta_6$-reflections in 
$\lbrace \alpha_{i j} \colon \lbrace i, j \rbrace \in v(\eu{P}) \rbrace$.
From \cite{act:cubic_surfaces} we know that this map is onto. In fact seven of these reflections making an 
affine $E_6$-diagram generate $\Gamma$ and  $\Gamma = \operatorname{Aut}^+(L)$; see \cite{act:cubic_surfaces} 7.21.
Let $\eu{H}$ be the union of the mirrors of the short roots of $L$.
Let $\mathbb{B}(L)^{\circ} = \mathbb{B}(L) - \eu{H}$ denote the compliment of the mirrors in $\mathbb{B}(L)$.
Define the orbifold fundamental group 
\begin{equation*}
G =  \pi_1^{\operatorname{orb}}( \Gamma \backslash \mathbb{B}(L)^{\circ}, \tau).
\end{equation*}
For our purpose, an element of $G$ is a pair $(\gamma, \phi)$ where $\phi \in \Gamma$ and
$\gamma$ is a homotopy class of paths from $\tau$ to $\phi \tau$. 
The group law in $G$ is given by 
\[
(\gamma, \phi) (\gamma', \phi') = (\gamma * \phi \gamma', \phi \phi')
\]
where $\gamma * \phi \gamma'$ means $\gamma$ followed by  $\phi \gamma'$.
\end{definition}
\begin{definition}[Meridians in ball model]
\label{def-meridians-in-ball-model}
For each mirror $M_{i j} = \alpha_{i j}^{\bot}$ closet to $\tau$, there is an element
\begin{equation*}
(\mu_{i j}, R_{i j}) \in G
\end{equation*}
where $\mu_{i j}$ is a path based at  $\tau$
that goes around $\alpha_{i j}^{\bot}$ once. 
Here $R_{i j}$ denotes that $\zeta_6$-reflection in $\alpha_{i j}$. 
The path $\mu_{i j}$ consists of three segments, and is roughly described as follows:
Let $q_{i j}$ be the generic point on the mirror $M_{i j}$ that is closest to $\tau$
and let $\overline{ \tau, q_{i j}}$ be the geodesic ray joining $\tau$ to $q_{i j}$.
The path $\mu_{i j}$ follows $\overline{ \tau, q_{i j}}$
till it is very close to $q_{i j}$, then follows an counterclockwise arc going around
the mirror $M_{i j}$ making an angle $\pi/3$ and then follows the geodesic 
$\overline{q_{i j}, R_{i j} \tau}$ to $R_{i j} \tau$.
The element $(\mu_{i j}, R_{i j})$ or 
the path $\mu_{i j} $ representing it is called the \textit{meridian} (in the ball model) 
around  $M_{i j}$ based at $\tau$ (in the ball model);
for a careful definition of them, see  \cite{AB2}.
In the next section, we represent the meridians
$(\mu_{i j}, R_{i j})$ by explicit paths $X^{i j}_{u(t)}$ in the Sylvester family
of cubic forms.
\end{definition}
%
%
\section{A neighborhood of the Clebsch surface in Sylvester family}
\label{section-Clebsch}
\begin{definition}[The Sylvester family of cubic surfaces]
Let 
\[
\mathbb{P}^3_0 = \lbrace [x_1 \colon \dotsb \colon x_5  ] \in \mathbb{P}^4 \colon x_1 + \dotsb + x_5 = 0 \rbrace \simeq \mbb{P}^3.
\]
For each $\lambda \in \mathbb{P}^4$, define $X_{\lambda} \subseteq \mathbb{P}^3_0$ by
\[
\lambda_1 x_1^3 + \dotsb + \lambda_5 x_5^3 = 0.
\]
One knows that $X_{\lambda}$ and $X_{\lambda'}$ define the same cubic surface if and only if 
$\lambda$ and $\lambda'$ are in the same $S_5$ orbit.
The cubic surface represented by the form
$X_{\lambda}$ is denoted by $X_{\lbrace \lambda \rbrace}$ or simply by $X_{\lambda}$ if there is no 
chance of confusion.
Let $\Lambda \subseteq \mbb{P}^4$ be the set of parameters 
for which $X_{\lambda}$ defines a stable cubic surface.
The family 
\begin{equation*}
\mc{X}  = \lbrace X_{\lambda} \colon \lambda \in \Lambda \rbrace \to \Lambda.
\end{equation*}
is called the \textit{Sylvester family}.
\par
For $\lambda = \mbb{1} =  [1: 1: 1: 1: 1]$ we obtain the Clebsch surface 
$X_{\mbb{1}}$ with equations $x_1^3 + \dotsb + x_5^3 = x_1 + \dotsb + x_5 = 0$ having
 $S_5$-symmetry.
 \end{definition}
\begin{lemma}
There is a $S_5$-equivariant bijection $f$ 
from a neighborhood of $\tau$ in the ball $\mathbb{B}(L)$ to a neighborhood of the form $X_{\mathbb{1}}$
 in $\mc{X}$; in particular taking $f(\tau) = X_{\mathbb{1}}$ and 
 $f$ is compatible with the quotient maps to  $\mc{M}_{\msf{sm}}$.
 \label{lemma-identify-nbd-of-tau}
\end{lemma}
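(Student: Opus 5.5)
The plan is to build $f$ by composing two local $S_5$-covers of a neighbourhood of $X_{\mathbb{1}}$ in $\mc{M}_{\msf{st}}$, and to show that each is a local isomorphism onto that neighbourhood compatible with the $S_5$ actions. On the ball side, Theorem \ref{theorem-ACT} gives the orbifold isomorphism $\mc{M}_{\msf{st}} \simeq \Gamma\backslash\mathbb{B}(L)$. Near $\tau$, the quotient map $\mathbb{B}(L)\to\Gamma\backslash\mathbb{B}(L)$ is locally the quotient by the stabilizer $\Gamma_\tau$. By Lemma \ref{l-first-shell-around-tau}, $\tau$ is $S_5$-fixed and lies on no mirror. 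I will argue that $\Gamma_\tau = S_5$: the stabilizer of the Clebsch surface in the orbifold structure is its automorphism group, which is $S_5$, and the isomorphism of Theorem \ref{theorem-ACT} matches orbifold stabilizers, because away from $\eu{H}$ the projective automorphism group acts faithfully on the ball. Hence there is a small $S_5$-invariant ball $U\ni\tau$ with $U/S_5 \simeq$ a neighbourhood $N$ of $X_{\mathbb{1}}$ in $\mc{M}_{\msf{st}}$.

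On the Sylvester side, the parameter map $\Lambda\to\mc{M}_{\msf{st}}$, $\lambda\mapsto X_{\{\lambda\}}$, factors through $\Lambda/S_5$. By the recalled fact that $X_\lambda\cong X_{\lambda'}$ iff $\lambda'\in S_5\lambda$, it is injective on $\Lambda/S_5$. Near $\mathbb{1}$ the space $\Lambda$ is a smooth $4$-dimensional chart of $\mbb{P}^4$ on which $S_5$ acts linearly by permutations with $\mathbb{1}$ as an isolated fixed point, so we get a continuous injection from $V/S_5$ into $\mc{M}_{\msf{st}}$, for a small invariant neighbourhood $V$ of $\mathbb{1}$. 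This injection is holomorphic as a map of orbifolds. I will show it is an open embedding by invariance of domain on the underlying $4$-dimensional spaces, after checking that $V\to\mc{M}_{\msf{st}}$ has finite fibres and the dimensions agree. Alternatively, I can check that the Kodaira–Spencer map at $\mathbb{1}$ is an isomorphism, so that $V\to N$ is the orbifold chart. Shrinking, both $U$ and $V$ become $S_5$-uniformizing charts of the same neighbourhood $N$ of the orbifold point $X_{\mathbb{1}}$ with group $S_5$.

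The final step uses the uniqueness of orbifold charts: two uniformizing charts with the same isotropy group over the same neighbourhood are related by an equivariant biholomorphism, up to composing with an automorphism of the group. Here both actions are the standard permutation action, by coordinates $x_1,\dots,x_5$ on $L$ and on $\lambda$, so I will match them via the identification of the automorphism group of $X_{\mathbb{1}}$ with $S_5$ permuting the $x_i$. This yields $f:U\to V$ with $f(\tau)=\mathbb{1}$, $S_5$-equivariant and commuting with the maps to $\mc{M}_{\msf{st}}$, hence to $\mc{M}_{\msf{sm}}$ off the discriminant. Concretely, I would lift the composite $U\to N\cong V/S_5$ through the branched cover $V\to V/S_5$ by path lifting on the complement of the fixed loci. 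The lift exists because $U$ minus those loci is simply connected modulo codimension $\ge 2$ issues, and $S_5$ acts freely off them in each chart. I would then fix the lift by requiring $\tau\mapsto\mathbb{1}$ and equivariance.

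The main obstacle I expect is the precise identification of the stabilizers: that $\Gamma_\tau$ is exactly $S_5$ rather than $S_5\times\{\pm1\}$ or a larger group in $P\Gamma$, and that the Sylvester map is unramified at $\mathbb{1}$ beyond the $S_5$ action, i.e. that the $\lambda_i$ give local coordinates on moduli. For the Sylvester side I would first try checking directly that the tangent map at $\mathbb{1}$ to the space of cubic forms modulo $\op{GL}_4$ is injective. For the ball side I would compute $\Gamma_\tau$ by noting that it permutes the ten nearest mirrors of Lemma \ref{l-first-shell-around-tau}, so it acts on the Petersen graph, giving $\Gamma_\tau\hookrightarrow\op{Aut}(\eu{P})=S_5$.
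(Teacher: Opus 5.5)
This is essentially the paper's argument: the ball near $\tau$ (with $\Gamma_\tau=S_5$) and the Sylvester parameters near $\mathbb{1}$ are both local $S_5$-orbifold charts of a neighbourhood of $X_{\mathbb{1}}$ in $\mc{M}_{\msf{st}}$, and they are matched by the lifting property of orbifold covers. Your Petersen-graph bound $\Gamma_\tau\hookrightarrow S_5$ and your Kodaira--Spencer check only fill in facts the paper quotes as known, but you should also state, as the paper does, that the image of $\tau$ in $\mc{M}_{\msf{st}}$ is $X_{\mathbb{1}}$ because $X_{\mathbb{1}}$ is the unique smooth cubic with $S_5$-symmetry. One correction to your concrete lifting step: the fixed loci of $S_5$ in $U$ include the complex hypersurfaces $(e_i-e_j)^{\bot}$ fixed by the transpositions, so their complement is not simply connected; the lift exists instead because $U$ itself is simply connected and both charts branch to order $2$ along the same divisor of $N$, which is exactly the orbifold lifting property the paper invokes.
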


\begin{proof}[sketch of proof]
The Sylvester family yields a map
\[
\pi_{\mc{X}} : \mc{X} \to \mc{M}_{\msf{st}}
\]
that is an $S_5$-orbifold cover onto its image and it image
is known to be an open dense subset of $\mc{M}_{\msf{st}}$.
On the other hand the theorem of  \cite{act:cubic_surfaces}, quoted in \ref{theorem-ACT}, 
gives  us a $\Gamma$-orbifold cover 
\[
\pi : \mathbb{B}(L) \to \mc{M}_{\msf{st}}.
\]
It is known that $X_{ \mathbb{1} }$ is the only smooth cubic surface with $S_5$ symmetry.
Since $\tau \in \mathbb{B}(L)$ has stabilizer $S_5$ in $\Gamma$, it follows that $\pi(\tau)$ is the 
Clebsch cubic.
Furthermore, $\tau$ has an $S_5$-invariant neighborhood
$U \subseteq \mathbb{B}(L)^{\circ}$ such that $\pi$ restricts to an $S_5$-orbifold cover 
$\pi \vert_U: U \to \pi(U)$. By lifting property of orbifold covering spaces, after possibly shrinking $U$ 
to a smaller open set, we obtain an $S_5$-equivariant injective map $f: U \to \mc{X}$ 
 such that $\pi_{\mc{X}} \circ f = \pi$
 taking $\tau$ to the form $X_{\mathbb{1}}$.
 \end{proof}
\begin{topic}[Singular elements in $\mc{X}$]
The surface $X_{\lambda}$ is singular at some $x$ if and only if
$d (\sum_{i } \lambda_i x_i^3) = \sum_i 3\lambda_i x_i^2 dx_i$
is proportional to $d( \sum_i x_i) = \sum_i dx_i$, that is,
$ \lambda_i x_i^2 = \lambda_j x_j^2$ for all $i, j$, that is, 
$[ \dotsb: x_i \colon \dotsb] = [ \dotsb \colon \lambda_i^{-1/2} \colon \dotsb]$, so that $\sum_i x_i = 0$ implies
 $ \sum_i \lambda_i^{-1/2} = 0$.
\end{topic}
\begin{topic}[Deformations of Clebsch surface to singular surfaces]
We now describe ten ways to deform $X_{ \mbb{1} }$ to a singular cubic form with a
simple $A_1$ singularity along one parameter families in  $\lbrace X_{\lambda} \rbrace$.
For this, fix a two element subset $\lbrace i, j \rbrace \subseteq \lbrace 1, 2, 3, 4, 5 \rbrace$.
For $[s: t] \in \mathbb{P}^1$, let $X^{i j}_{s: t}$ denote the cubic form $X_{\lambda}$ where
$\lambda_i = \lambda_j = t$ and $\lambda_k = s$ if $k \in  \lbrace 1, 2, 3, 4, 5 \rbrace - \lbrace i , j \rbrace$.
For example 
\begin{equation*}
X^{4 5}_{s: t} = X_{ s: s: s: t : t }.
\end{equation*}
This defines ten one parameter families of cubic forms, parametrized by $  \mathbb{P}^1$.
The cubic surface $X_{ s: s: s: t : t }$
 is singular if and only if $2s^{-1/2} + 3 t^{-1/2} = 0$
that is, $s/4 = t/9$. So this one parameter family deforms
$X_{\mbb{1}}$ along a one parameter family to the singular surface
obtained when $(s, t) = (4, 9)$. 
\end{topic}
\begin{definition}[A one parameter subfamily of $\mc{X}$ containing a  meridian]
\label{def-meridian-in-Sylvester-family}
Let 
\[
X^{45}_w = X_{w} := X_{1:1:1:1 - 3 w: 1 - 3 w} \text{\; and \;}
\mc{X}^{45} := \lbrace \XFF_{w}  \colon w \in \C \rbrace.
\]
So $\mc{X}^{45}$ is a family 
parametrized by $\C$ (the specific choice of parameter $1 - 3 w$ is to simplify some calculation later on).
Note that $X_0 = X_{\mbb{1}}$ is the Clebsch surface.
The only singular surface in this family 
occurs when $w$ is equal to 
\[
\wstar = 5/27.
\]
The singular surface $\XFF_{\wstar} = X_{9:9:9:4:4}$ has just one singular point:
an ordinary double point at $[2: 2: 2:-3:-3]$.  
\footnote{This surface is given by the equation $f(x_2, x_3, x_4, x_5) = 
-9( x_2 + x_3 + x_4 + x_5)^3 + 9x_2^3 + 9x_3^3 + 4x_4^3 +  4x_5^3 = 0$
with singular point $[2:2:-3:-3] = [1:1:-3/2:-3/2]$.
On the affine patch $x_2 = 1$, it has the affine equation
$f(1, x_3, x_4, x_5)$
and  singular point $p = (1, -3/2, -3/2)$.
Take coordinate centered at $p$, i.e., define $(y_3, y_4, y_5) = (x_3, x_4, x_5) - p$.
Then in terms of $y_3, y_4, y_5$, the equation of the affine patch of $\XFF_{4/9}$ becomes
$f_2 + f_3 = 0$ where 
$f_2 = 9( 6y_3^2 + y_4^2 + y_5^2 + 6( y_3 y_4 + y_3 y_5 + y_4 y_5))$ 
and $f_3 = -9(y_3 + y_4 + y_5)^3 + 9 y_3^3 + 4 y_4^3 + 4 y_5^3 $.
Since the quadratic form $f_2$ is non-degenerate, this is a ordinary double point
(also called  a node or an $A_1$ singularity).}
\par
Fix a small positive real number $\epsilon$. Let $t_1 = 0$ and $t_2 = 2 \pi$.
Fix $t_0 < 0 < 2 \pi <  t_3$.
Consider a continuous path $w: [t_0 , t_3] \to \C$
that consists of three segments:
\begin{itemize}
\item A straight line $w^+(t)$ from $w = 0$ to $w = \wstar - \epsilon$ for $t \in [t_0, t_1]$,
\item followed by a counter-clockwise circle  $w^{\circ}(t) = \wstar - \epsilon e^{i t} $, for $t \in [t_1, t_2]$,
\item followed by a straight line $w^{-}(t)$ that is the reverse of $w^+(t)$ for $t \in  [t_2,  t_3]$.
\end{itemize}
Then $X^{45}_{w(t)} = X_{w(t)}$ determines a loop in the moduli space $\mc{M}_{\msf{sm}}$
starting and ending at $X_{\mathbb{1}}$.
Changing $\lbrace 4, 5 \rbrace$ with $\lbrace i, j \rbrace$ for each two element subset of $\lbrace 1, 2, 3, 4, 5 \rbrace$
we get ten explicit loops $X^{i j}_{w(t)}$ in $\mc{M}_{\msf{sm}}$.
These loops will be denoted by $g_{i j}$ and called meridians (in the Sylvester model).
 \end{definition}
 \begin{remark}
 Under the identification given by lemma \ref{lemma-identify-nbd-of-tau}, the first segment of the loop 
 $g_{i j}$ correspond to the first segment of the path $\mu_{i j}$ defined in 
 \ref{def-meridians-in-ball-model}.
 Thus, the loops $g_{i j}$ represent the elements
 $(\mu_{i j}, R_{i j}) $ defined in the orbifold fundamental group of the $4$-ball quotient 
 and the  ten mirrors closest to $\tau$ correspond to the 
ten possible degenerations of $X_{\mathbb{1}}$ to a surface with a single $A_1$-singularity.
 The claims made in this remark, i.e. the
 the equality of the meridians defined in the ball  and the meridians defined in the Sylvester family, 
can probably be proved using symmetry arguments in a manner 
 similar to theorem 5.2 of \cite{AB3}. We are going to skip these proofs since these
statements  are plausible from symmetry considerations and since the proof of our main theorem
\ref{theorem-monodromy-and-Petersen} (or  \ref{theorem-monodromy})
do not depend on these claims.
 In other words, the meridians in the ball quotient do not play a role in the actual proofs in 
 this article. We introduced them for two reasons. The first reason is that 
we got to the generators $g_{i j}$ while studying the ball-quotient description.
The second reason is that they illustrate the remarkable similarity of this example with the
thirteen dimensional ball quotient studied in the monstrous proposal \cite{AB1, AB2, AB3}. 
  \end{remark}
 %
%
  \section{Monodromy of the lines on Clebsch surface}
\label{section-monodromy}
 \begin{definition}[Lines on the cubic surfaces in the family $\mc{X}^{45}$]
 \label{def-family-45}
Let 
\[
\beta = \beta_- = ( 1 - \sqrt{5})/2 \text{\; and \;} \beta_+ = ( 1 + \sqrt{5})/2.
\]
Let
\[
e_1 = (1,0,0,0,0), \; e_2 = (0,1,0,0,0), \;
\dotsb, e_5 = (0,0,0,0,1), \text{\; and \;} e_{i j} = e_i - e_j.
\]
 For $x, y \dotsb \in \C^5$,
 we abbreviate 
 \[
 \mathbb{P} (x, y, \dotsb) := \mathbb{P} ( \C x + \C y + \dotsb).
 \]
   Take any $w \in \C$. Recall the family of surfaces $X_w = X_{1:1:1: 1 - 3w: 1 - 3w}$ given by 
   \[
   x_1 + x_2 + x_3 + x_4 + x_5 = x_1^3 + x_2^3 + x_3^3 + (1 - 3 w)( x_4^3 + x_5^3)= 0.
   \]   
  There are three immediately visible lines on $\XFF_w$ passing through the ``Eckardt point" 
  $\mathbb{P}(e_{45} )$,
 namely $\mathbb{P} ( e_{12}, e_{45})$,  $\mathbb{P} ( e_{13}, e_{45})$, $\mathbb{P} ( e_{23}, e_{45})$.
 The intersection of the plane $\mathbb{P} (e_{12}, e_{13}, e_{45} )$ and $\XFF_w$ is the union
 of these three lines. To find more lines on $\XFF_w$ for $w$ close to $0$,
 we consider the line 
 $L(0, 0) = \mathbb{P} ( e_{24}, e_{35})$ on $\XFF_0$ and
 we guess that when we move from $X_0$ to $X_w$ in the moduli space, the line 
 $L(0, 0)$ moves to a line on $X_w$ of the form
\[
  L(a, b) := \mathbb{P}( u_{a b} , v_{a b} )
\]
  where
 \[
   u_{ a b}  =  e_{24}  + a e_{23}  + b e_{21}     \text{\; and \;}
 v_{ a b}  =  e_{35} + a e_{32}  + b e_{31} .
  \]
 Lemma \ref{l-ab} below tells us when 
 a line $L(a, b)$ lies on $\XFF_w$ for $w$ close to $0$.
   \end{definition}
 %
 
  %
  \begin{lemma} 
  The line $L(a, b)$ lies on  $\XFF_{w}$ 
  if and only if 
  \[
  H(b) :=b^3 - b^2 - b = w \text{\; and  \;}
 g_2(a, b) :=(1 + b) a^2 + (1 + b)^2 a + b^3 = 0.
 \]
 \label{l-ab}
  \end{lemma}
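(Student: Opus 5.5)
The plan is to write the condition ``$L(a,b)\subseteq \XFF_w$'' as the vanishing of a binary cubic form, and then simplify the resulting coefficient equations.

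First, the linear equation needs no work. Every $e_{ij}$ has coordinate sum zero, so $u_{ab}, v_{ab}$, and hence every point of $L(a,b)$, automatically satisfy $x_1+\dotsb+x_5=0$. In coordinates,
\[
u_{ab} = (-b,\; 1+a+b,\; -a,\; -1,\; 0), \qquad v_{ab} = (-b,\; -a,\; 1+a+b,\; 0,\; -1).
\]
Write $F_w = x_1^3+x_2^3+x_3^3+c(x_4^3+x_5^3)$ with $c = 1-3w$. The line lies on $\XFF_w$ if and only if the binary cubic $F_w(xu_{ab}+yv_{ab})$ vanishes identically in $(x,y)$.

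Next I expand this cubic. Its four coefficients are
\[
\sum_k c_k u_k^3,\qquad 3\sum_k c_k u_k^2 v_k,\qquad 3\sum_k c_k u_k v_k^2,\qquad \sum_k c_k v_k^3,
\]
where $c_k$ is $1$ or $c$. The involution $x_2\leftrightarrow x_3$, $x_4\leftrightarrow x_5$ swaps $u_{ab}$ and $v_{ab}$ and preserves $F_w$. Hence the $x^3$ and $y^3$ coefficients agree, and so do the two mixed coefficients. We are left with two equations.

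The mixed equation is
\[
-b^3 - a(1+a+b)^2 + a^2(1+a+b) = 0 .
\]
Factoring $-a(1+a+b)$ out of the last two terms gives $-b^3 - a(1+a+b)(1+b)=0$, which is exactly $-g_2(a,b)=0$.

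The pure equation is $(1+a+b)^3 - a^3 - b^3 = c = 1-3w$. Here I use the identity
\[
(1+a+b)^3 - 1 - a^3 - b^3 = 3(1+a)(1+b)(a+b),
\]
which turns the equation into $w = -(1+a)(1+b)(a+b)$. Expanding, the right-hand side equals $-\bigl[(1+b)a^2 + (1+b)^2 a + b(1+b)\bigr]$. Modulo $g_2$, the terms $(1+b)a^2 + (1+b)^2 a$ can be replaced by $-b^3$, so the right-hand side becomes $b^3 - b^2 - b = H(b)$.

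Therefore the system $\{E_{\text{mixed}}=0,\ E_{\text{pure}}=0\}$ is equivalent to $\{g_2=0,\ H(b)=w\}$, and both directions of the lemma follow.

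The only real obstacle is bookkeeping in the expansion: keeping the signs in the coordinates of $u_{ab}, v_{ab}$ straight, and noticing the cube identity together with the use of $g_2$ to eliminate $a$ from the pure equation. There is no conceptual difficulty.
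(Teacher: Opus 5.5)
Your proposal is correct and follows the paper's proof: the paper also expands the cubic form along the line as $3 g_1(w,a,b)(s^3+t^3) - 3 g_2(a,b)(s^2t+st^2)$, with $g_1 = (a+b)(1+a)(1+b) + w = g_2(a,b) - (H(b)-w)$, and this is exactly what your symmetry observation, the cube identity, and the reduction modulo $g_2$ establish. Your version simply writes out the intermediate computations that the paper leaves to the reader.
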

\begin{proof}
Let 
\[
f_{w} (x_1, \dotsb, x_5) = x_1^3 + x_2^3 + x_3^3 + (1 - 3 w) (x_4^3 + x_5^3)
\]
 be the defining polynomial
of the hypersurface $\XFF_w$ in $\mathbb{P}^3_{0}$.
  The line $\mathbb{P}( u_{a b} , v_{a b} )$ lies on $\XFF_{w}$ if and only if
  $f_{w} ( s u_{a b} + t v_{a b}) = 0$  for all $s, t$.
  One computes
  \begin{equation*}
   f_{w} ( s u_{ a b} + t v_{ a b} ) 
  = 3 g_1(w, a, b) (s^3 + t^3)
  - 3 g_2( a, b) (s^2 t + s t^2)
  \end{equation*}
  where 
  \[
  g_1( w, a, b) = (a + b)( 1 +  a) (1 + b) + w = g_2(a, b) - (H(b) - w).
  \]
%
  So $\mathbb{P}( u_{a b} , v_{a b} )$ is a line on $\XFF_{w} $
if and only if  $g_1(w, a, b) = g_2(a, b) = 0$ if and only if
$H(b) - w = g_2(a, b) = 0$.
\end{proof}
\begin{example}
\label{example-of-lines-on-X1}
When $w = 0$, we find that $g_1(0, a , b) = 0$ if $a = -b$ and 
$H(b) = 0$ implies
$a = b = 0$ or $b^2 - b - 1 = 0$.  Which gives the solution 
$-a = b = \beta_{\pm} = (1 \pm \sqrt{5})/2$. This gives us the following two lines 
on $X_{\mbb{1}}$:
\begin{align*}
 L(-\beta_{\pm}, \beta_{\pm}) 
&= \mathbb{P}( e_{24}  - \beta_{\pm} e_{23}  + \beta_{\pm} e_{21}, e_{35} - \beta_{\pm} e_{32}  + \beta_{\pm} e_{31}) \\
&= \mathbb{P}( e_{24}  + \beta_{\pm} e_{31}, e_{35} + \beta_{\pm} e_{21})
\end{align*}
Permuting the coordinates, one obtains twelve distinct lines on  $X_{\mbb{1}}$ of this form.

\par
Below we show that for $ w \in [0, \wstar]$ 
the two equations in  \ref{l-ab} have a unique 
continuous family of real solutions $(a(w), b(w))$ satisfying
$(a(0), b(0)) = (0,0)$.
The singular cubic $\XFF_{\wstar}$ corresponds to $w= \wstar$ and 
one obtains 
\[
(a(\wstar),b(\wstar))= ((\tfrac{1}{6})^{1/2} - \tfrac{1}{3}, -\tfrac{1}{3}).
\]
This gives us some lines on the nodal cubic surface $\XFF_{\wstar}$.
\end{example}
Our objective now is to describe in detail  the set $\mc{L}(X_{\mbb{1}})$ of $27$ lines on $X_{\mbb{1}}$ 
 in  \ref{def-27lines}  and the action of meridians $g_{i j}$'s on 
 $\mc{L}(X_{\mbb{1}})$.
  Actually theorem \ref{theorem-monodromy} only describes the action of $g_{45}$. 
 The action of the other $g_{i j}$'s are obtained by using the $S_5$ symmetry.
The elements of $\mc{L}(X_{\mbb{1}})$
 are naturally parametrized by the lines and pentagons in the 
Petersen graph $\eu{P}$ (see figure \ref{figure-Petersen}). 
So first we describe the pentagons in $\eu{P}$.
\begin{definition}[Names for pentagons in Petersen graph]
\label{def-names}
The symmetric group $S_5$ acts on the two element subsets of $\lbrace 1, 2, 3, 4, 5 \rbrace$, hence
acts on the Petersen graph $\eu{P}$ as its full automorphism group.
The set  $\pentagon$ of Pentagons in $\eu{P}$
has size $12$ and is transitively permuted by $S_5$.
A convenient way to label these twelve pentagons is as follows.
Given distinct elements $a, b, c, d, e \in  \lbrace 1, 2, 3, 4, 5 \rbrace$, 
let $\pentagon_{a b c d e}$ be the pentagon  in $\eu{P}$ with vertex set
$\lbrace \lbrace a, b \rbrace, \lbrace b, c \rbrace, \lbrace c, d \rbrace, \lbrace d, e \rbrace, \lbrace e, a \rbrace \rbrace$.
Note that the pentagon depends on $a, b, c, d, e$ only up-to cyclic permutation of
elements and left-right reversal of the elements, i.e.
\[
\pentagon_{a b c d e} = \pentagon_{ b c d e a} = \dotsb
\text{\; and \;}
\pentagon_{a b c d e} = \pentagon_{ e d c b a}.
\]
So one can list the twelve elements of $\pentagon$ as 
$\pentagon_{a b c d 5}$ where $a, b, c, d \in \lbrace 1, 2, 3, 4 \rbrace$
are distinct elements and $a < d$.
\end{definition}
%
 %
%
\begin{definition}[Names for the $27$ lines on Clebsch surface]
\label{def-27lines}
The set  $\mc{L}(X_{\mbb{1}})$ of $27$ lines on $X_{\mbb{1}}$
naturally splits into two subsets $\mc{L}_{\edge}$ and $\mc{L}_{\pentagon}$ of size $15$ and $12$ respectively
that are naturally in bijection with the edges and pentagons in the Petersen graph $\eu{P}$ respectively.
Each pair of disjoint two element subsets
$E = \lbrace \lbrace \lbrace i, j \rbrace, \lbrace k, l \rbrace \rbrace$
of $\lbrace 1, 2, 3, 4, 5 \rbrace$ determine an edge of $\eu{P}$ and correspond to
 a line on $X_{\mbb{1}}$, namely,
\begin{equation}
L_{i j, k l} = 
L_E = \mbb{P} ( e_{ i j}, e_{k l} ).
\label{eq-def-L15}
\end{equation}
This names the fifteen lines of $\mc{L}_{\edge}$ using the edges of $\eu{P}$.
Let $\mc{L}_{\pentagon}$ be the remaining twelve lines.
We want to write down a bijection $\mc{L}_{\pentagon} \to \pentagon$.
We observed in \ref{example-of-lines-on-X1} that
$\mc{L}(X_{\mathbb{1}})$ contains elements of the from
$\mbb{P}( e_{i p} + \beta e_{j k} , e_{ j q} + \beta e_{i k} )$
where $\lbrace i, j, k, p, q\rbrace = \lbrace 1,  2, 3, 4, 5 \rbrace$.
One verifies that there are twelve lines on $X_{\mathbb{1}}$ of this form and that these
lines are all distinct from the lines in $\mc{L}_{\edge}$. 
So each element of $\mc{L}_{\pentagon}$ can be (non-uniquely) written in
this form.
Start with one of these lines, say $\mbb{P} ( e_{3 1} + \beta e_{5 4}  , e_{5 2} + \beta e_{3 4} )$.
Note that
\[
\mbb{P} ( e_{3 1} + \beta e_{5 4}  , e_{5 2} + \beta e_{3 4} )
 = \mathbb{P} \op{RowSpan}(A)
 \]
  where 
\[
A 
= [A_{\cdot, 1}, A_{\cdot, 2}, A_{\cdot, 3}, A_{\cdot ,4} , A_{\cdot, 5}]
=   \begin{bmatrix} 
1 & 0 &       -1 & \beta & -\beta \\
0 & 1 & -\beta & \beta & -1
 \end{bmatrix} 
=  -\begin{bmatrix} 
e_{3 1} + \beta e_{5 4}  \\
e_{5 2} + \beta e_{3 4} 
\end{bmatrix}.
\]
Given $\sigma \in S_5$, let 
\[
A^{\sigma} = 
[A_{\cdot, \sigma(1)}, A_{\cdot,  \sigma(2)}, A_{\cdot, \sigma(3)}, A_{\cdot ,\sigma(4)} , A_{\cdot, \sigma(5)}]
\]
 be the matrix obtained by permuting the columns of $A$  according to $\sigma$.
 This gives $5! = 120$ matrices $\lbrace A^{\sigma} \colon \sigma \in S_5 \rbrace$.
One verifies that
\[
\mbb{P} \op{RowSpan} A^{\sigma} \in \mc{L}_{\pentagon} \text{\; for all \;} \sigma \in S_5.
\]
Let $\gamma, \eta \in S_5$ be the elements $\gamma = (1 \; 2 \; 3 \;4 \; 5)$
and $\eta = (1 \; 4)(2 \; 3)$.
Verify that
\[
A^{\gamma} = \begin{bmatrix} 0 & -1 \\ 1 & -\beta \end{bmatrix} A
\text{\;\;\;\; and \;\;\;\;}
A^{\eta} = \begin{bmatrix} \beta & -1 \\ \beta  & -\beta \end{bmatrix} A
\]
The elements $\gamma$ and $\eta$ generate 
a dihedral group $ \langle \gamma, \eta \rangle$ of order $10$  and
\[
\op{RowSpan} A^{\sigma}   
=  \op{RowSpan} A \text{\; for all \;} \sigma \in \langle \gamma, \eta \rangle.
\]
This yields a bijection 
\begin{equation*}
\langle \gamma, \eta \rangle \backslash S_5 \to \mc{L}_{\pentagon}
\text{\; given by \;}
\langle \gamma, \eta \rangle \sigma \mapsto  \mathbb{P} \op{RowSpan} A^{\sigma}.
\end{equation*}
Now observe that the stabilizer of the pentagon 
$\pentagon_{1 2 3 4 5}$ in $S_5$ is also the dihedral group
$\langle \gamma, \eta \rangle$\footnote{The stabilizer of $\pentagon_{1 2 3 4 5}$
clearly contains $\langle \gamma, \eta \rangle$ so has order at least $10$.
Since $S_5$ acts transitively on the $12$ pentagons,  
the stabilizer of $\pentagon_{1 2 3 4 5}$ is exactly 
$\langle \gamma, \eta \rangle$.}.
This yields a bijection 
\[
\langle \gamma, \eta \rangle \backslash S_5 \to  \pentagon  
\text{\;\; given by \;\;}
\langle \gamma, \eta \rangle  \sigma \mapsto \sigma^{-1} \pentagon_{1 2 3 4 5}.
\]
Combining the two bijections above, we obtain a bijection
$\mc{L}_{\pentagon} \rightarrow \pentagon$ such that
\[
\mathbb{P} \op{RowSpan} A^\sigma \mapsto \sigma^{-1} \pentagon_{1 2 3 4 5}
\text{\; for all \;} 
\sigma \in S_5.
\]
Recall that we labeled the $12$ pentagons in $\eu{P}$ as $\pentagon_{a b c d 5}$
where $\lbrace a, b, c, d \rbrace = \lbrace 1, 2, 3, 4 \rbrace$ and $a < d$.
The line corresponding to $\pentagon_{a b c d 5}$ under the above bijection
will be denoted by $L_{ a b c d 5}$. 
Let $[a, b, c, d, e] \in S_5$ denote the permutation 
$\left( \begin{smallmatrix} 1 & 2 & 3 & 4 & 5 \\ a & b & c & d & e \end{smallmatrix} \right)$.
Since $\pentagon_{a b c d 5} = 
[a, b, c, d, 5]  \pentagon_{1 2 3 4 5}$, one has
 \[
 L_{ a b c d 5}
 = \mathbb{P} \op{RowSpan} A^ {[a, b, c, d, 5]^{-1} }.
 \]
 This completes naming of the $27$ lines on $X_{\mathbb{1}}$.
\end{definition}
\begin{theorem}[monodromy action on lines]
\label{theorem-monodromy}

Recall that $\beta_{\pm} = (1 \pm \sqrt{5})/2$ and $\beta = \beta_-$.
Let $i, j, k, p, q$ be indices such that $\lbrace i, j, k \rbrace = \lbrace 1, 2, 3 \rbrace$
and  $\lbrace p , q \rbrace = \lbrace 4, 5 \rbrace$.
\begin{enumerate}
\item[(1)] There are fifteen lines on $X_{\mbb{1}}$  fixed by $g_{45}$. These are:
\begin{enumerate}
\item three of the form $\mbb{P}(e_{i j}, e_{45})$, namely
$L_{1 2, 4 5}, L_{ 1 3, 4 5}, L_{2 3, 4 5}$.
\item six of the form $\mathbb{P}( e_{i j} , e_{k p})$, namely
$L_{12, 34}$, $L_{12, 35}$, $L_{13, 24},$ $L_{13, 25}$, $L_{23, 14}$, $L_{23, 15}$.
\item six 
of the form 
\(
\mathbb{P}( e_{i p} + \beta_+ e_{k j}, e_{k q} + \beta_+ e_{i j} ) 
= 
\mathbb{P}( e_{j i} + \beta e_{k q}, e_{k p} + \beta e_{j q}),
\)
namely, $L_{21435}, L_{12435}, L_{24135}, L_{13425}, L_{14235}, L_{14325}$.
\end{enumerate}
\item[(2)] There are six pairs lines on $X_{\mbb{1}}$ that are exchanged pairwise by $g_{45}$.
For each permutation $[i , j, k] = \bigl( \begin{smallmatrix} 1 & 2 & 3 \\ i & j & k \end{smallmatrix} \bigr) \in S_3$, one has a pair
\[
L_{i4, k5} = \mathbb{P}( e_{i 4} , e_{k 5} ), \;\;\;\;\;\;\;\;
L_{i j k 4 5} = \mathbb{P}( e_{i 4} + \beta e_{k j}, e_{k 5} + \beta e_{i j}  ).
\]
These twelve lines form a double-six configuration.
For each pair, $L_{i4, k5}$ and $L_{i j k 4 5}$
are on the opposite sides of the double-six.
 The three lines $\lbrace L_{i j k 4 5} \colon [i, j, k] \in A_3 \rbrace$
 are on one side of the double-six and the three lines
$\lbrace L_{i j k 4 5} \colon [i, j, k] \in S_3 - A_3 \rbrace$ are on the other side.
 \end{enumerate}
\end{theorem}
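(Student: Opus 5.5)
Tracking the lines along the loop $g_{45}$ splits into two parts. The lines that stay rigid or real-symmetric along the whole family are fixed. The lines that pass near a vanishing cycle at the node are swapped, and Lemma \ref{l-monodromy-of-roots-lemma} controls how.

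\emph{Fixed lines.} For every $w$, the three lines $\mathbb{P}(e_{ij},e_{45})$ through the Eckardt point lie on $X_w$, so they are constant along the loop and hence fixed. For the six lines $\mathbb{P}(e_{ij},e_{kp})$ I would try an ansatz in the spirit of Definition \ref{def-family-45}: deform within the plane $\mathbb{P}(e_{ij},e_{kp},e_{kq})$ (or a similar ansatz). This should give a single-valued family, for instance a family depending on a square root of $1-3w$, which is holomorphic on the disc $|w|<1/3 \ni w_*$. Such a family returns to itself.

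The six $\beta_+$-lines need different handling. After permuting indices they are of the form $L(a,b)$ with $b$ near $\beta_+$ or $\beta$, or they can be described by an analogous ansatz. One then checks that the corresponding root of $H(b)=w$ stays simple along the whole path. We have $H'(b)=3b^2-2b-1=(3b+1)(b-1)$, so the critical points are $b=-1/3$ and $b=1$, with $H(-1/3)=5/27=w_*$ and $H(1)=-1$. The branches of $b$ starting at $\beta_\pm$ never meet the critical value $w_*$ coming from $-1/3$, so they are single-valued near $w_*$. For each such $b$, the quadratic $g_2(a,b)=0$ has two roots $a$, and I would check that these do not collide along the loop. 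Hence all these lines are fixed.

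\emph{Exchanged lines.} The key computation is for $L(a,b)$ with $(a,b)$ near $(0,0)$. This is the line $\mathbb{P}(e_{24},e_{35})=L_{24,35}$, which is $L_{i4,k5}$ with $i=2$, $k=3$, $j=1$. Along $w^+$, the real branch $b(w)$ goes from $0$ to near $-1/3$, where $H$ has a zero of order $m=2$ in $H(b)-w_*$. By Lemma \ref{l-monodromy-of-roots-lemma}, going once around $w_*$ sends $b(w_*-\epsilon)$ to the other root of $H(b)=w_*-\epsilon$ near $-1/3$. Running back along $w^-$, this root continues to the other real root of $H(b)=0$ on the other side of $-1/3$. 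Since $H(b)=b(b^2-b-1)$, that root is $\beta=\beta_-\approx-0.618$.

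Next I track $a$ continuously via $g_2$. Its discriminant is $(1+b)^4-4(1+b)b^3$, which is nonzero near $b=-1/3$, so the relevant branch of $a$ is determined by continuity. I would check that at $b=\beta$ the branch lands on $a=-\beta$. By Example \ref{example-of-lines-on-X1}, the endpoint is then $L(-\beta,\beta)=\mathbb{P}(e_{24}+\beta e_{31},e_{35}+\beta e_{21})$, which is $L_{21345}$ in the form $\mathbb{P}(e_{i4}+\beta e_{kj},e_{k5}+\beta e_{ij})$. Since $g_{45}$ acts by a permutation, it sends $L_{21345}$ back: indeed $g_{45}^2$ is the full loop around a node with $m=2$, so its local monodromy is trivial. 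Alternatively, one can run the same argument backwards from $(-\beta,\beta)$.

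The remaining pairs follow from the stabilizer $S_3\times S_2$ of $\{4,5\}$. The loop $g_{45}$ is invariant under this group, so its action commutes with it, and these symmetries transport the pair $(L_{24,35},L_{21345})$ to all six pairs. One has to check that the naming of Definition \ref{def-27lines} is equivariant, so that for example swapping $4,5$ matches up with $[i,j,k]\mapsto[k,j,i]$. The double-six claim is a direct intersection check: two lines meet iff the $4\times4$ matrix formed by their spanning vectors is singular. One verifies that $L_{i4,k5}$ is disjoint from $L_{ijk45}$ and meets the other five $L_{i'j'k'45}$, and that lines of the same parity class in $A_3$ are mutually skew.

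\emph{Main obstacle.} The hardest step is the analytic continuation of $a$: making sure the branch of $g_2$ chosen at $b=\beta$ is $-\beta$ rather than the other root, and that no collision of $a$-roots is crossed along the real segment. I would first handle this by explicit numerics on $[\beta,0]$, and then justify it rigorously by a sign check on the discriminant of $g_2$.
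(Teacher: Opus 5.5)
Your handling of (1a), (1b) and the exchanged pair is essentially the paper's proof. You use the same $L(a,b)$ family, the same order-two ramification of $H(b)=w$ at $b_*=-1/3$ via Lemma \ref{l-monodromy-of-roots-lemma}, and the same plane $\mathbb{P}(e_{12},e_{34},e_{35})$ for (1b). Your guess in (1b) is right: the discriminant of $h_1=wb^2+(1-w)b+w$ is $(1-3w)(1+w)$. Hence $b_+(w)=-2w/\bigl((1-w)+\sqrt{(1-3w)(1+w)}\bigr)$ is holomorphic on $|w|<1/3$, which contains the loop. Transporting one pair by the $S_3\times S_2$-invariance of $X_w$ is cleaner than the paper's ``similar''. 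The $a$-branch you flag is settled as in the paper: at $w=0$ one has $g_1=(a+b)(1+a)(1+b)$, and $a$ stays real and positive on the real segments, so $a=-1$ is excluded. However, two of your steps are wrong as written.

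\emph{First, the $\beta_-$ branch in (1c).} Your claim that the branch of $H(b)=w$ starting at $\beta_-$ avoids $-1/3$ is false. $H$ increases from $0$ to $5/27$ on $[\beta_-,-1/3]$, so this branch runs straight into the node. It is the reverse of your path in (2), and your (1c) reasoning applied to it would ``prove'' that $L(-\beta,\beta)$ is fixed. Only $\beta_+$ is relevant. Under $S_3\times S_2$, which preserves $X_w$ and hence the equations of Lemma \ref{l-ab}, the six lines of (1c) are exactly the translates of $L(-\beta_+,\beta_+)$. The members with $b=\beta$ are the lines $L_{ijk45}$ of part (2). What you need is $H(b)-w_*=(b+\tfrac13)^2(b-\tfrac53)$. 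The branch from $\beta_+$ increases to near $5/3$, a simple preimage of $w_*$, so its lift around the small circle closes up. The discriminant of $g_2$ also has no zero on $[\beta_+,5/3]$.

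\emph{Second, the double-six check.} The check you propose would fail, because $L_{i4,k5}$ does not meet the five non-partner lines $L_{i'j'k'45}$. For example, $L_{24,35}$ and $L_{12345}=\mathbb{P}(e_{14}+\beta e_{32},e_{35}+\beta e_{12})$ are skew. Their four spanning vectors span the same space as $e_{24},e_{35},e_{12},e_{32}$, which are independent. The pattern forced by the sides in the statement is:
\begin{itemize}
\item $L_{i4,k5}$ meets only the two non-partner $L_{i'j'k'45}$ with $[i',j',k']$ of the same parity as $[i,j,k]$, and is skew to the other four;
\item two lines $L_{i4,k5}$ meet if and only if their permutations have opposite parity;
\item the same holds for two lines $L_{ijk45}$.
\end{itemize}
These incidences among the $L_{i4,k5}$ themselves were missing from your list. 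By $S_3\times S_2$-equivariance, checking them for one representative of each orbit suffices.
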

Before the proof, we extract Theorem \ref{theorem-monodromy-and-Petersen} from 
the statement of \ref{theorem-monodromy}.
\begin{proof}[proof of \ref{theorem-monodromy-and-Petersen}]
Consider figure \ref{fig-six-Petersens}. Let $A = \lbrace 4, 5 \rbrace$.
Each copy of $\eu{P}$ highlights an edge $E_i$ of the hexagon disjoint from $A$
and  highlights the pentagon $\pentagon(A, E_{i + 3})$ in the notation of Theorem 
\ref{theorem-monodromy-and-Petersen}.
Observe that these edges and pentagons are exactly the labels of the six pairs of lines of $\mc{L}(X_{\mbb{1}})$ 
in Theorem \ref{theorem-monodromy}, part (2).
By \ref{theorem-monodromy} these six pairs of lines are exchanged 
by the monodromy action of  $g_{45}$.
This verifies Theorem \ref{theorem-monodromy-and-Petersen} for $A = \lbrace 4, 5 \rbrace$.
Theorem \ref{theorem-monodromy-and-Petersen} then follows from $S_5$-symmetry.
\end{proof}

\begin{figure}

\newcommand{\PetersenHighlighted}[2]{%
\begin{tikzpicture}[
    scale= .5,
    transform shape,
    vertex/.style={
      circle,
      draw,
      fill=white,
      minimum size=2.2em,
      inner sep=0pt,
      font=\large
    },
    baseedge/.style={line width=.8pt},
    penthl/.style={draw=gray!65, line width=5pt, line cap=round},
    hexedgehl/.style={line width=2.4pt, line cap=round}
  ]

  \foreach \i in {1,...,5} {
    \coordinate (v\i) at ({90-(\i-1)*72}:3cm);
  }

  \foreach \i in {1,...,5} {
    \coordinate (u\i) at ({90-(\i-1)*72}:1.5cm);
  }

  \draw[penthl] #1;

  \draw[baseedge] (v1) -- (v2) -- (v3) -- (v4) -- (v5) -- (v1);

  \draw[baseedge] (u1) -- (u3) -- (u5) -- (u2) -- (u4) -- (u1);

  \foreach \i in {1,...,5} {
    \draw[baseedge] (v\i) -- (u\i);
  }

  \draw[hexedgehl] #2;

  \foreach \i/\label in {1/13, 2/24, 3/35, 4/41, 5/52} {
    \node[vertex] at (v\i) {\label};
  }
  \foreach \i/\label in {1/45, 2/51, 3/12, 4/23, 5/34} {
    \node[vertex] at (u\i) {\label};
  }

\end{tikzpicture}%
}

%
%
%

\newcommand{\PGraphOne}{%
  \PetersenHighlighted
    {(u1)--(v1)--(v5)--(u5)--(u3)--cycle}
    {(v2)--(v3)}
}

\newcommand{\PGraphTwo}{%
  \PetersenHighlighted
    {(u1)--(u3)--(u5)--(u2)--(u4)--cycle}
    {(v3)--(v4)}
}

\newcommand{\PGraphThree}{%
  \PetersenHighlighted
    {(u1)--(u4)--(u2)--(v2)--(v1)--cycle}
    {(v4)--(v5)}
}

\newcommand{\PGraphFour}{%
  \PetersenHighlighted
    {(u1)--(v1)--(v2)--(v3)--(u3)--cycle}
    {(v5)--(u5)}
}

\newcommand{\PGraphFive}{%
  \PetersenHighlighted
    {(u1)--(u3)--(v3)--(v4)--(u4)--cycle}
    {(u5)--(u2)}
}

\newcommand{\PGraphSix}{%
  \PetersenHighlighted
    {(u1)--(u4)--(v4)--(v5)--(v1)--cycle}
    {(u2)--(v2)}
}

\begin{tikzpicture}

  \begin{scope}[shift={(0, 2)}] %
    \node at (0,0) {\PGraphOne};
  \end{scope}

\begin{scope}[shift={(4, 2)}] %
    \node at (0,0) {\PGraphFive};
  \end{scope}

  \begin{scope}[shift={(8, 2)}] %
    \node at (0,0) {\PGraphThree};
  \end{scope}

  \begin{scope}[shift={(0,-2)}] %
    \node at (0,0) {\PGraphFour};
  \end{scope}

  \begin{scope}[shift={(4, -2)}] %
    \node at (0,0) {\PGraphTwo};
  \end{scope}

  \begin{scope}[shift={(8, -2)}] %
    \node at (0,0) {\PGraphSix};
  \end{scope}

\end{tikzpicture}
\caption{Labels for the six pair of lines on $X_{\mathbb{1}}$ that are exchanged by the action of
$g_{45}$ are highlighted to illustrate Theorem \ref{theorem-monodromy-and-Petersen}.}
\label{fig-six-Petersens}
\end{figure}

\begin{proof}[proof]
One verifies that the $27$ lines listed are distinct. So these are all the lines on $X_{\mbb{1}}$.
First we prove the more interesting part (2) and then part (1).
\par
{\it proof of (2)}. The argument for each pair is similar, so 
we verify that the pair
$\mathbb{P}( e_{2 4} , e_{3 5} )$ and $\mathbb{P}( e_{2 4} + \beta e_{3 1}, e_{3 5} + \beta e_{2 1}  )$
are exchanged by $g_{45}$. We discussed this pair 
in \ref{def-family-45}, \ref{l-ab}, \ref{example-of-lines-on-X1}
and we use the notation introduced there.
From \ref{def-meridian-in-Sylvester-family}, 
recall the path $w(t)$ and the explicit parametrization $X_{w(t)}$ of the meridian $g_{4 5}$. 
Recall that $w(t)$ depends on choice of a small positive real number $\epsilon$.
Recall from lemma \ref{l-ab} that the line 
\[
  L(a, b) = \mathbb{P}(  e_{24}  + a e_{23}  + b e_{21} , e_{35} + a e_{32}  + b e_{31} )
\]
  lies on $X_{w(t)}$ if and only if
\begin{equation}
H(b) - w(t) = g_2(a, b) = 0. 
\label{eq-Hg2}
\end{equation}
At the beginning of the path $X_{w(t)}$
 we start with the line
$L(0, 0) = \mathbb{P}( e_{2 4} , e_{3 5} )$ on $X_{\mathbb{1}}$, i.e.,
$(a, b) = (0, 0)$ when $w = 0$.
\par	 
We want to follow the solutions of 
$H(b) - w = g_2(a, b) = 0$
as $w$ varies along $w(t)$, starting at $(a, b) = (0,0)$ when $w = 0$.
We claim that there exists a unique continuous path of solutions $(a(w(t)), b(w(t)))$ of 
equation \eqref{eq-Hg2}
starting at  $(a, b) = (0, 0)$ and ending at $(a, b) = (-\beta, \beta)$.
The curves $w(t)$ and $b(w(t))$ are shown in figure \ref{fig:1}.
%
%
\begin{figure}
\begin{tikzpicture}
\begin{scope}[xshift = 0cm, xscale = 3.2, yscale = 3.2]
\draw [fill = white] (-1.43, -1.2) -- (2.43, -1.2) -- (2.43, .9) -- (-1.43, .9) -- (-1.43, -1.2); 
\draw[fill = lightgray, color = lightgray] (-1.4, -.4) -- (1.6,-.4) -- (2.4, .4) -- (-.6, .4) -- (-1.4, -.4);
\draw [arrows = {-Stealth[fill = none, inset=2pt, length=7pt, angle'=60, slant = .8]} ] (-1,0)--(2,0);
\draw [arrows = {-Stealth[fill = none, inset=2pt, length=7pt, angle'=60]} ] (0,-1)--(0,.6);
\draw [arrows = {-Stealth[fill = none, inset=2pt, length=6pt, angle'=60, slant = -.8]} ] (0, 0)--(.4,.4);
\draw [red, line width = .6pt] (1.618, 0)--(1.66,0);
\draw [purple, line width = .6pt] (1.68, 0) to [out=60, in = 60] (1.66,0);
\draw[ultra thin, olive] (5/3, -1) -- (5/3, .8); 
\draw [purple, line width = .6pt] (1.66, 0) to [out=240, in = 240] (1.68,0);
\draw [red, line width = .6pt] (0,0)--(-1/3 +.08,0);
\draw [red, line width = .6pt, arrows = {-Stealth[fill = none, inset = 0pt, length = 4pt]}] (0,0) -- (-.12,0);
\draw [red, line width = .6pt] (-1/3 -.08,0) -- (-.618,0);
\draw [red, line width = .6pt, arrows = {-Stealth[fill = none, inset = 1pt, length = 4pt]}] (-1/3-.08,0) -- (-1/3-.2,0);
\draw [purple, line width = .6pt] (-1/3 +.08, 0) to [out=60, in=60] (-1/3 - .08,0);
\draw [fill] (-1/3+.08, 0) circle (.004cm); 
\draw [fill] (-1/3-.08, 0) circle (.004cm); 
\draw[ultra thin, olive] (-1/3, -1) -- (-1/3, .8); 
\draw[scale=1, domain=-1:1.8, smooth, variable=\x, blue, line width=.24] plot ({\x}, {\x*\x*\x - \x*\x  - \x});
\draw[ultra thin, olive] (-1, 5/27) -- (2, 5/27); 
\draw[purple, ultra thin] (-1, 5/27-.012) -- (2, 5/27 - .012); 
\draw[purple, ultra thin] (-1/3+ .08, -1) -- (-1/3 +.08, .5); 
\draw[purple, ultra thin] (-1/3- .08, -1) -- (-1/3 -.08, .5); 
\draw[purple, ultra thin] (5/3- .004, -1) -- (5/3 -.004, .5); 
\draw[ultra thin, arrows = {-Stealth[fill = none, inset=.5pt, length=1.5pt, angle'=60]} ] (-1/3+.04, -1)--(-1/3+.07,-1);
\draw[ultra thin, arrows = {-Stealth[fill = none, inset=.5pt, length=1.5pt, angle'=60]} ] (-1/3+.04, -1)--(-1/3+.01,-1);
\node at ( -1/3 +.05 ,-1.05) {\tiny $\epsilon_1$};
\draw[ultra thin, arrows = {-Stealth[fill = none, inset=.5pt, length=1.5pt, angle'=60]} ] (-1/3-.04, -1)--(-1/3-.07,-1);
\draw[ultra thin, arrows = {-Stealth[fill = none, inset=.5pt, length=1.5pt, angle'=60]} ] (-1/3-.04, -1)--(-1/3-.01,-1);
\node at ( -1/3 -.05 ,-1.05) {\tiny $\epsilon_2$};
\node at (-.9, 5/27 - .05) {\tiny \textcolor{purple}{ $w = 5/27 - \epsilon$}};
\node at (-.95, 5/27 + .04) {\tiny  \textcolor{olive}{$w = 5/27$}};
\node at (-.63, .035) {\tiny $\beta$};
\node at (1.59, .035) {\tiny $\beta_+$};
\draw [fill] (0,0) circle (.004cm); 
\draw [fill] (-1/3,0) circle (.004cm); 
\draw [fill] (-.618,0) circle (.004cm);
\draw [fill] (1.618,0) circle (.004cm);
\draw [fill] (0,5/27) circle (.004cm);
\node[rotate=90] at (-1/3-.03,.7) {\tiny \textcolor{olive}{$x = -1/3$}};
\node[rotate=90] at (5/3-.03,.7) {\tiny \textcolor{olive}{$x = 5/3$}};
\node[xslant=.5, rotate = -20] at (-1.3,-.35) {\tiny $b$};
\node[xslant =.5, rotate = 0] at (.44,.45) {\tiny $Im(b)$};
\node[xslant =.5, rotate = 0] at (2.2,0) {\tiny $x = Re(b)$};
\node at (.06,.6) {\tiny $w$};
\node [rotate = 80] at (1.84,.62) {\tiny \textcolor{blue}{$w = H(b) = b^3 - b^2 - b$}};
\node at (-.12, -.05) {\tiny \textcolor{red}{$b(w(t))$}};
\end{scope}
\begin{scope}[xshift=3.7cm, yshift = 2.4cm, xscale = 3.2, yscale = 3.4]
\draw [fill = white] (-.3,-.3)--(.3,-.3)--(.3,.3)--(-.3,.3)--(-.3,-.3);
\node at (-.25,-.25) {\tiny $w$};
\draw [red, line width = .6pt] (0,0)--(5/27 - .02, 0);
\node at (0,-.04) {\tiny $0$};
\node at (.1, .05) {\tiny \textcolor{red}{$w(t)$}};
\node at (5/27,-.07) {\tiny $5/27$};
\draw [red, line width = .6pt, arrows = {-Stealth[fill = none, inset = 0pt, length = 4pt]}] (0,0) -- (.07,0);
\draw [red, line width = .6pt, arrows = {-Stealth[fill = none, inset = 1pt, length = 4pt]}] (5/27-.02,0) -- (5/27-.09,0);
\draw [purple, line width =.6pt] (5/27,0) circle (.02cm);
\draw [fill] (0,0) circle (.004cm); 
\draw [fill] (5/27-.02,0) circle (.004cm);
\draw [fill] (5/27,0) circle (.004cm);
\end{scope}
\begin{scope}[xshift=1cm, yshift = 2.82cm, xscale = 4, yscale = 4]
\draw [fill = white] (-.4,-.15)--(.4,-.15)--(.4,.15)--(-.4,.15)--(-.4,-.15);
\draw [red, line width = .6pt] (-.2,0) --(0,0);
\draw [red, line width = .6pt, arrows = {-Stealth[fill = none, inset = 0pt, length = 4pt]}] (-.2,0) -- (-.12,0);
\draw [red, line width = .6pt] (.1,0) --(.3,0);
\draw [red, line width = .6pt, arrows = {-Stealth[fill = none, inset = 1pt, length = 4pt]}] (.1,0) -- (.18,0);
\draw [purple, line width = .6pt] (0,0) --(.1,0);
\node at (-.2, .04) {\tiny $t_0$};
\node at (0,.04) {\tiny $t_1$};
\node at (0,-.04) {\tiny $0$};
\node at (.12, .04) {\tiny $t_2$};
\node at (.12,-.04) {\tiny $2 \pi$};
\node at ( .33, .04) {\tiny $t_3$};
\node at (-.35, -.1) {\tiny $t$};
\draw [fill] (-0.2,0) circle (.004cm);
\draw [fill] (0,0) circle (.004cm);
\draw [fill] (.1,0) circle (.004cm);
\draw [fill] (.3,0) circle (.004cm);
\end{scope}
\end{tikzpicture}
\caption{The curve $w(t)$ is shown in the $w$-plane (inset) and the curve
  $b(w(t))$ is shown in the gray shaded $b$-plane. The red segments of 
  $w(t)$ are $w^{\pm}(t)$ and the purple circular segment is $w^{\circ}(t)$.
  Similarly for $b$. The horizontal thin brown line is $w = \wstar = 5/27$ and
  the horizontal thin purple line is $w =\wstar - \epsilon$}
   \label{fig:1}
\end{figure}
%
%
The claim implies, as 
we move in the moduli space along the loop $g_{45}$, there is a unique continuous way to move
the line $L(0,0) = \mathbb{P} (e_{24}, e_{35} )$ along $L(a(w(t)), b(w(t))) \subseteq \XFF_{w(t)} $ 
so that when we come back to $X_{\mbb{1}}$, the line $L(0,0)$ moves to 
\[
L(-\beta, \beta)
= \mathbb{P}( e_{24}  + \beta e_{31}, e_{35} + \beta e_{21}).
\] 
%
\par
It remains to prove the claim.
Consider the polynomial $H(b) = b^3 - b^2 - b$. It has three real roots $\beta = \beta_-$, $0$, $\beta_{+}$,
 is nonnegative for $[\beta_-, 0]$ and $[\beta_+, \infty)$, is negative otherwise,
and it has a unique local maximum at
\[
b_* = -1/3 \text{\; with \;} 
H(\bstar) = \wstar = 5/27.
\]
Using intermediate value theorem, 
it follows that all three roots of $H(b) =  w $ are real for $w \in [0,\wstar]$
with a double root $b = \bstar$ for $w = \wstar$. Recall that $w^+(t)$ ends
and $w^-(t)$ begins at $\wstar - \epsilon$. Let $b = \bstar + \epsilon_1$ and 
$b = \bstar - \epsilon_2$ be the two roots of $H(b) =  \wstar - \epsilon$
near $\bstar$ where $\epsilon_1, \epsilon_2$ are small positive real numbers; see figure \ref{fig:1}.
From the inverse function theorem of one variable calculus, 
 it follows that as $w$ moves along $w^+(t)$ from $0$ to $\wstar - \epsilon$, 
 the equation $H(b)  = w$ has a unique
continuous family of solutions $b^+(w)$ starting at $0$
and this solution curve $b^+(w)$ monotonically decreases from 
 $0$ to $\bstar + \epsilon_1$.
 \par
%
Next comes the key part of the calculation where
we need to follow the root of $H(b) = w$ along the circle $w^{\circ}(t)$.
For this, we let 
\[
z = b - \bstar, \text{\; so that \;}
H(b) - w = (\wstar - w) - z^2 ( 2 - z).
\]
As $w$ moves along $w^{\circ}(t)$, 
the variable $(\wstar - w)$ makes one anti-clockwise rotation along the circle
$\wstar - w^{\circ}(t) = \epsilon e^{i t}$ for $0 \leq t \leq 2 \pi$. 
The holomorphic map 
\[
z \mapsto z^2 ( 2 - z)
\]
 is a ramified double cover
near the origin, so there is a unique continuous branch of solution
$z^{\circ}(t)$ of 
\[
z^2 ( 2 - z) = \epsilon e^{ i t}
\text{\; for \;}0 \leq t \leq 2 \pi
\]
starting at $z^{\circ}(0) = \epsilon_1$ and one has
$z^{\circ}( 2 \pi) = -\epsilon_2$ 
(see \ref{l-monodromy-of-roots-lemma})\footnote{In the
notation of \ref{l-monodromy-of-roots-lemma}, one has the
local analytic isomorphism $\varphi(z) = z \sqrt{2 - z}$  near $0$.
So $z^{\circ}(t) = \varphi^{-1}( \sqrt{\epsilon} e^{ i t/2} )$.
By definition, $\epsilon_1, -\epsilon_2$ are the two real solutions of 
$ z^2( 2 - z) = \epsilon$, i.e. $\varphi(z)^2 = \epsilon$ near $z = 0$.
Since the real function $z \mapsto \varphi(z)$ is monotone increasing near $0$, 
one has
$\epsilon_1 = \varphi^{-1}( \sqrt{\epsilon})$ and $-\epsilon_2 = \varphi^{-1}( -\sqrt{\epsilon})$.
It follows that $z^{\circ} (2 \pi) = \varphi^{-1}( -\sqrt{\epsilon}) = -\epsilon_2$. 
}.
Thus we find that  as $w$ varies along $w^{\circ}(t)$, the equation
 $H(b) = w$ has a unique continuous family of complex solutions
 $b^{\circ}(t) = \bstar +  z^{\circ}(t)$ that starts at $\bstar + \epsilon_1$ and ``rotates
 by $180$ degrees anti-clockwise in the $b$-plane around $b_*$" to end at $\bstar - \epsilon_2$.
 \par
Finally, again using the intermediate value theorem, it follows that
as $w$ moves along $w^-(t)$ from $\wstar - \epsilon$ to $0$, 
 the equation $H(b) = w$ has a unique
continuous family of solutions $b^-(w)$ starting at $\bstar - \epsilon_2$
and this solution curve $b^-(w)$ ends at $\beta = \beta_-$.
\par
Thus we find that 
$H(b ) = w(t)$ have a unique continuous family of solutions $b(w(t))$ given by 
$b^+(w^+(t))$ followed by $b^{\circ}(w^{\circ}(t))$ followed by $b^-(w^-(t))$ 
as shown in figure \ref{fig:1}.
The solutions $b(w)$
are real and lies in the range $[\beta_-, 0]$ except 
when $w$ is in the circular part $w^{\circ}(t)$.
Now from the equation $g_2(a, b) = 0$ (which is quadratic in $a$)
it is easy to see that
\[
a(w) = - \tfrac{b(w) + 1}{2}  +  \sqrt{    \left( \tfrac{ b(w) + 1}{2} \right )^2 -  \tfrac{b(w)^3}{ b(w) + 1} }
\]
is the unique continuous branch of solutions for $a$ starting at $a(0) = 0$.
One verifies that the quantity under the square root sign takes the value $1/6 \neq 0$,
when $b(w) = b_* = -1/3$, so
there is no problem taking complex square roots in the portion $b^{\circ}$ since
we are away from the branch point.
This shows the existence of the unique path of solutions $(a(w(t)), b(w(t)) )$ to equation
\eqref{eq-Hg2}
 starting at $(0,0)$.
 One verifies that except for $w$ in the circular part $w^{\circ}(t)$
the solutions $(a(w), b(w))$ are real, with $b(w(t))$ monotonically decreasing and
consequently $a(w(t))$ monotonically increasing with $t$.
 In particular, $a(w) $ is real and positive except for $w$ in the circular part $w^{\circ}(t)$.
 At the end of the path, i.e. for $t = t_3$, we have $w = 0$, so
 $g_1(w, a, b) = 0$ (with $g_1$ as in the proof of \ref{l-ab})
 implies $a + b = 0$ (since positivity of $a$ rules out $a = -1$). Since $b = \beta$, we have $a = -\beta$.
This proves the claim and shows that the monodromy action of $g_{4 5}$
 moves $L(0, 0)$ to $L(-\beta, \beta)$.  
 Finally, applying the same argument starting from $L(-\beta,\beta)$ shows that
 $g_{4 5}$ moves $L(-\beta, \beta)$ back to $L(0, 0)$.
 This proves part (2).
\par
{\it proof of (1a)}. The three lines in (1a) are fixed lines on each $\XFF_w$ and thus are obviously fixed by 
$g_{45}$.
\par
{\it proof of (1b)}. The argument for each line in part (b) is similar. Consider the line $\mathbb{P}( e_{12}, e_{34})$
and its one parameter deformation of the form 
\[
L(b) = \mathbb{P}( e_{12} , e_{34} + b e_{35})
\]
 (with $b \neq -1$; because
$b = -1$ gives back the fixed line $\mathbb{P}( e_{12}, e_{45})$).
 The line $L(b)$ lies on
$\XFF_{w}$ if and only if\footnote{
With $f_w$ as in proof of
\ref{l-ab}, verify that $f_w( s e_{12} + t (e_{34} + b e_{35}) )= 3 t^3 ( 1 + b) h_1(w, b)$.} 
\begin{equation*}
h_1(w, b) := w b^2 + (1 -w) b + w = 0,
\end{equation*}
that is,
\begin{equation*}
b = b_{\pm}(w) := \tfrac{ w - 1  \pm \sqrt{ (1 - w)^2 - 4 w^2} }{2 w} \text{\; for \;} w \neq 0; 
\text{\; and \;}
b = b_+(0) := 0 \text{\; for \;} w = 0.
\end{equation*}
We want to follow the solutions of $h_1(w, b) = 0$ 
as $w$ varies along $w(t)$ starting at $b = 0$ when $w = 0$.
We claim that there exists a unique continuous path of solutions $b(w(t))$ of $h_1(w(t), b) = 0$
starting at  $0$ and ending at $0$, namely
$t \mapsto b_+(w(t))$. 
First note that as $w \to 0$, the expression $b_-(w)$ does not have a limit 
while $b_+(w)  \to 0$.
 So as we start moving along $w^+(t)$, there is a unique continuous 
path of solutions of $h_1(w, b) = 0$ starting at $b(w(t_0)) = b(0) = 0$, namely $b_+(w(t))$. 
Now to verify the claim, note that
as $w$ increases from $0$ to $\wstar$,  the quantity $ h_2(w) := (1-w)^2 - 4 w^2$ 
monotonically decreases from $h_2(0) = 1$ to $h_2(\wstar) = 2^7 3^{-5}$. 
So near $w = \wstar$ in the complex plane, $h_2(w)$ has a well
defined single-valued holomorphic square root that extends the real square root
taken along $w^+(t)$.
It follows that  $b_+(w^{\circ} (t))$ is a closed loop.
Finally along the reverse segment $w^{-}(t)$, the solution path $b_+(w^-(t))$ 
retraces the path $b_+(w^+(t))$ in reverse back to $b_+(w(t_0)) = b_+(0) = 0$.
\par
{\it proof of (1c)}. 
The six lines listed in part (1c) are of the form
\[
\mathbb{P}( e_{i p} + \beta_+ e_{k j}, e_{k q} + \beta_+ e_{i j} ) 
\]
The argument is the same for each of these.
So pick the line
\[
\mathbb{P} ( e_{24} + \beta_+ e_{31}, e_{35} + \beta_+ e_{21} ) = L(-\beta_+, \beta_+),
\]
which belongs to the 2-parameter family $L(a, b)$ described in
\ref{def-family-45} and argue that it is fixed by the monodromy action of $g_{45}$.
The details are similar to the proof of part (b) given above, so we'll be brief.
We need to follow the solution of \eqref{eq-Hg2} along the path $w(t)$
 starting at $(a, b) = (-\beta_+,\beta_+)$ when $w = 0$ (see \ref{l-ab}). 
We claim that there exists a unique continuous path of solutions 
$(a(w(t)), b(w(t)))$ of 
equation \eqref{eq-Hg2}
along $w(t)$
starting and ending at  $(-\beta_+, \beta_+)$. 
Note that 
\[
H(b) - \wstar  = (b + \tfrac{1}{3} )^2 ( b - \tfrac{5}{3}).
\]
Since $H$ is strictly increasing on $[\beta_+, 5/3]$,
using the inverse function theorem,  it follows that there is a unique 
continuous function $b^+(w^+(t))$ such that $H( b^+(w^+(t)) ) = w^+(t)$ satisfying
 $b^+(0) = \beta_+$ and $b^+$ monotonically increases 
from $\beta_+$ to $(5/3 - \epsilon_3)$ (for some $\epsilon_3 > 0$) 
as $w^+(t)$ goes from $0$ to $(\wstar - \epsilon)$.
The argument for the circular part is similar to the argument in part (1).
Setting $b_2 = 5/3 - b$, note that 
\[
H(b) - w = (\wstar - w) - (2 - b_2)^2  b_2,
\]
so we have to find a continuous family of solutions of the equation
\[
(2 - b_2)^2 b_2 = \epsilon e^{i t} \text{\; for \;} 0 \leq t \leq 2 \pi
\]
 starting at $b_2 =  \epsilon_3$
when $t = 0$. Since the complex analytic function
\[
b_2 \mapsto (2 - b_2)^2 b_2
\]
 is a local isomorphism
near $0$, there is a unique closed loop $b_2^{\circ}(w^{\circ}(t))$ solving this equation
starting with $b_2^{\circ} = \epsilon_3$ when $t = 0$. Setting $b^{\circ} = 5/3 - b_2^{\circ}$, we obtain
the unique closed loop $b^{\circ}(w^{\circ}(t))$ satisfying
$H( b^{\circ}(w^{\circ}(t)) ) = w^{\circ}(t)$, starting and ending at $b^{\circ}(0) = 5/3 - \epsilon_3$.
Finally, as we go back along $w^{-}(t)$ there is a unique continuous path of 
solutions to $H(b) = w$ starting at $5/3 - \epsilon_3$ and this solution just follows the
reverse of $b^+$. It follows that there is 
a unique continuous path of solutions 
$b(w(t))$ of $H(b) = w(t)$ starting and ending at  $\beta_+$. 
Now 
we can uniquely continue the branch of solutions for $a$ along $w(t)$ starting from $a = -\beta_+$ 
using the quadratic equation $g_2(a, b) = 0$ as in the proof of part (2)
(making sure that there is no problem taking the square root when $b(w(t))$ goes through the circular part)
and get a unique path of solutions $a(w(t))$ ending at $-\beta_{+}$.
This verifies part (1c).
\end{proof}
\begin{remark}
Let  $\rho:G  \to \op{Perm}( \mc{L}(X_{\mbb{1}}) )$ be the monodromy action
of the fundamental group $G = \pi_1( \mc{M}_{\msf{sm}}, X_{\mbb{1}} )$ on $ \mc{L}(X_{\mbb{1}} )$.
The theorem \ref{theorem-monodromy} above describes the permutation $\rho( g_{45})$.
Because of the $S_5$ symmetry, 
the action of all ten generators $g_{i j}$ are immediately
obtained by permuting the coordinates. One verifies
that the ten permutations $\lbrace \rho(g_{A}) \colon A \in  \eu{P}  \rbrace$
satisfies the Coxeter relations of the Petersen Graph $\eu{P}$ and the ``deflation relation" for each hexagon in $\eu{P}$.
If 
$A_1, \dotsb, A_6$ are the vertices of a hexagon in $\eu{P}$ labeled in cyclic order, 
and if we write $\rho_i = \rho(g_{A_i})$, 
then the deflation relation for this hexagon means
\[
\rho_1 \rho_2 \rho_3 \rho_4 \rho_5 = \rho_2 \rho_3 \rho_4 \rho_5 \rho_6.
\]
This is the relation that collapses the affine Weyl group of type $\tilde{A}_5$ to 
the spherical Weyl group of type $A_5$.
From this, after some calculation, 
it follows that if we take six  of these of the order 
$2$ permutations corresponding to an $E_6$ sub-diagram in $\eu{P}$ then
they generate $\rho(G)$ and  the action of these six generators precisely
recovers the action of the Weyl group of type $E_6$ on 
a $27$ element set (for example the short vectors of $E_6^{\vee}$ modulo sign);
see \cite{Sim}, \cite{HR}.
\end{remark}
\begin{topic}{\bf Acknowledgement.} This paper grew out of some calculations performed 
during the joint work with Daniel Allcock and  Eduard Looijenga on 
finding a presentation of moduli space of smooth cubic surfaces in terms of the Petersen graph
\cite{ABL}.
I would like to thank both of them for many useful conversations and for teaching me a lot
about cubic surfaces. GPT 5.5 was very useful in the final stages of preparing this preprint. It was used extensively  to find optimal versions of some statements, in particular \ref{l-monodromy-of-roots-lemma} and those in 
section \ref{def-names}, 
for creating figures  \ref{figure-Petersen}, \ref{fig-six-Petersens}, and for 
double checking calculations. It helped find and correct many typos.
\end{topic}

\end{document}